\def\cal{\mathcal} 
\def\wt{\widetilde}
\def\ol{\overline}
\def\wh{\widehat}
\def\CC{\mathbb{C}}
\def\PP{\mathbb{P}}
\def\QQ{\mathbb{Q}}
\def\ZZ{\mathbb{Z}}
\def\RR{\mathbb{R}}
\def\OO{\cal O}
\def\s-{\setminus}
\def\bcp{\mathbb C\mathbb P}
\def\cpb{\overline{\mathbb C \mathbb P ^2}}
\newtheorem{main}{Theorem}
\newtheorem{propo}[main]{Proposition}
\newtheorem{thm}{Theorem}[section]
\newtheorem{prop}[thm]{Proposition}
\newtheorem{defn}[thm]{Definition}
\newtheorem{cor}[thm]{Corollary}
\newtheorem{rmk}[thm]{Remark}
\numberwithin{equation}{section}
\begin{document}

\title{The Yamabe invariant of a class of symplectic 4-manifolds}

\author{ I. {\c S}uvaina}

\address{
        Department of Mathematics,1326 Stevenson Center, Vanderbilt University, Nashville, TN, 37240}

\email{ioana.suvaina@vanderbilt.edu}

\keywords{symplectic 4-manifolds, orbifolds,Yamabe invariant}

\subjclass[2000]{Primary: 53C21, 53D05; Secondary: 57R18, 57R57}

\date{\today}

\begin{abstract}
We compute the Yamabe invariant for a class of symplectic $4-$manifolds of general type obtained 
by taking the rational blowdown of K\"ahler surfaces.
In particular,  for any point on the half-Noether line we exhibit a simply connected minimal symplectic 
manifold for which we compute the Yamabe invariant.
\end{abstract}

\maketitle


\section{Introduction}
\label{intro}

The four dimensional case plays a special role in the smooth topology of manifolds. 
Unlike in other dimensions, a given topological $4-$manifold may admit infinitely many
smooth structures, with different geometric properties. 
The smooth structures can be distinguish via invariants, such as the Yamabe invariant.
This is, roughly speaking, the supremum of the scalar curvature of constant scalar curvature metrics with unit volume. 
For $4-$manifolds, the sign and the value of the Yamabe invariant can change when changing the smooth structure.

When the manifold admits a K\"ahler or symplectic structure of general type, the Yamabe invariant is negative. 
Moreover, the invariant also depends on the decomposability of the smooth structure, under the operation of
connected sum with complex projective spaces with the reversed orientation. 
More precisely, given a K\"ahler surface $M$ of non-negative Kodaira dimension
LeBrun shows \cite{no-ein}  that the invariant is equal to the Yamabe invariant of the minimal model $X$ of $M$ and equals:
$$Y(M)=Y(X)=-4\pi\sqrt{2c_1^2(X)}.$$
The strategy of his proof is to find first an upper bound and then show that there exists a sequence of metrics that optimizes it.
In the case when the manifold is a K\"ahler surface or a symplectic $4-$manifold of Kodaira dimension $0,1,$ or $2,$ 
LeBrun shows \cite{no-ein} that the Yamabe invariant has an upper bound in terms of the topological invariants of the minimal 
model $X$ of $M$:
\begin{equation}
\label{Y-bound}
Y(M)\leq -4\pi\sqrt{2c_1^2(X)}\leq0
\end{equation}

This gives a strong evidence that the Yamabe invariant is closely related to a topological invariant of a minimal model, 
but there is no general result in this direction. 
One has to be especially careful in the case of positive Yamabe invariant, where such a minimal model is not unique. 
For example $\bcp^2\#2\cpb$ is diffeomorphic $ (\bcp^1\times\bcp^1)\#\cpb.$ 
For these spaces, the $Y(\bcp^2)=12\sqrt 2\pi,$ \cite{Yam-plane}, but the value of the Yamabe invariant of $\bcp^1\times\bcp^1$ is still unknown.

An explicit computation of the invariant for generic spaces continues to elude us.
LeBrun raised the question when the Yamabe bound (\ref{Y-bound}) is optimized.  
Based on the fact that equality on one of the key estimates is obtained only for constant scalar curvature K\"ahler metrics, he conjectured that the Yamabe bound is optimized only in the K\"ahler case.
We show that there is a large class of symplectic manifolds of general type for which the Yamabe invariant can be computed, and it is the expected one.

\begin{main}
\label{main}
Let $M$ be a complex surface  containing $s$ linear chains $C_i$ of rational curves,
associated to the minimal resolution of a singularity of type
 $\frac1{n_i^2}(1, n_im_i-1),$ with $m_i<n_i,$ and $n_i, m_i$ relatively prime integers. 
We denote by $\wh M$  the complex orbifold surface obtained by collapsing the chains $C_i.$ 
Furthermore, let $\wh M'$ be the orbifold surface obtained from $\wh M$ by collapsing  all the rational curves 
of self-intersection $(-2)$ which do not pass through the singular points.
Assume that the canonical sheaf $K_{\wh M'}$ is ample.
Then, the symplectic manifold $N$ obtained by taking the generalized rational blowdown of the chains $C_i$ in $M$ is a minimal, symplectic $4-$manifold of general type, and its Yamabe invariant $Y(N)$ is 
\begin{equation}
Y(N)=-4\pi\sqrt{2c_1^2(N)}=Y^{orb}(\wh M)=Y^{orb}(\wh M').
\end{equation}

Moreover, if $N'=N\#l\cpb$ then $Y(N')=Y(N).$
\end{main}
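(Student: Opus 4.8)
The plan is to prove the two inequalities $Y(N')\le Y(N)$ and $Y(N')\ge Y(N)$ separately, using that the first part of the theorem has already established $Y(N)=-4\pi\sqrt{2c_1^2(N)}$, that $N$ is minimal symplectic of general type with $b_+(N)>1$, and that $\pm c_1(N)$ are monopole classes of $N$ with $c_1^2(N)>0$. Since $N'=N\#l\cpb$ is the $l$-fold symplectic blowup of $N$, it is again symplectic with $b_+(N')=b_+(N)>1$; writing $E_1,\dots,E_l\in H^2(N';\ZZ)$ for the exceptional classes ($E_i^2=-1$, $E_i\cdot E_j=0$ for $i\ne j$, and $E_i\cdot c_1(N)=0$), the Seiberg-Witten blowup formula shows that every class of the form $a_{\vec\epsilon}=c_1(N)+\sum_{i=1}^l\epsilon_iE_i$, with $\epsilon_i\in\{\pm1\}$, is a monopole class of $N'$.

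For the upper bound I would invoke LeBrun's scalar curvature estimate \cite{no-ein}: for any metric $g$ on a $4$-manifold with $b_+>1$ and any monopole class $a$, one has $\int s_g^2\,dV_g\ge 32\pi^2(a_g^+)^2$, where $a_g^+$ denotes the self-dual part of the $g$-harmonic representative of $a$. Fix a metric $g$ on $N'$ and abbreviate $x=c_1(N)_g^+$, $y_i=(E_i)_g^+$, so that $a_{\vec\epsilon,g}^+=x+\sum_i\epsilon_iy_i$. Averaging over the $2^l$ sign choices, the terms linear in the $\epsilon_i$ and the off-diagonal cross terms cancel, leaving
\[
2^{-l}\sum_{\vec\epsilon}(a_{\vec\epsilon,g}^+)^2=|x|^2+\sum_{i=1}^l|y_i|^2\ \ge\ |x|^2=(c_1(N)_g^+)^2\ \ge\ c_1(N)^2=c_1^2(N),
\]
the last inequality being the standard fact $(\alpha^+)^2\ge\alpha^2$ for a class of positive square. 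Hence for each $g$ there is a sign vector with $(a_{\vec\epsilon,g}^+)^2\ge c_1^2(N)$, whence $\int s_g^2\,dV_g\ge 32\pi^2c_1^2(N)$ for every $g$, i.e. $\mathcal I_s(N'):=\inf_g\int s_g^2\,dV_g\ge 32\pi^2c_1^2(N)$. Because $N'$ carries a nonzero Seiberg-Witten invariant it admits no metric of positive scalar curvature, so $Y(N')\le0$ and therefore $Y(N')=-\sqrt{\mathcal I_s(N')}$; combining these gives $Y(N')\le-4\pi\sqrt{2c_1^2(N)}=Y(N)$.

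For the lower bound I would use the behaviour of the Yamabe invariant under connected sum. Since $l\cpb$ admits a metric of positive scalar curvature, $Y(l\cpb)>0\ge Y(N)$, and Kobayashi's connected-sum inequality yields $Y(N')=Y\big(N\#(l\cpb)\big)\ge\min\{Y(N),0\}=Y(N)$; equivalently, one glues a positive-scalar-curvature cap on $\cpb\setminus B^4$ onto near-minimizers of $\int s^2$ on $N$ by a thin Gromov-Lawson neck, changing the scalar curvature functional by an arbitrarily small amount and so showing $\mathcal I_s(N')\le 32\pi^2c_1^2(N)$. Together with the upper bound this forces $Y(N')=Y(N)=-4\pi\sqrt{2c_1^2(N)}$.

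The heart of the argument, and the step I expect to be most delicate, is the upper bound: one must produce, metric by metric, a single monopole class whose self-dual part has square at least $c_1^2(N)$, and the averaging identity above is exactly what guarantees this uniformly in $g$, despite each individual $a_{\vec\epsilon}$ having total square $c_1^2(N)-l<c_1^2(N)$. The lower bound is comparatively routine once the positivity $Y(l\cpb)>0$ and the standard connected-sum machinery are in place.
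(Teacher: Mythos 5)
Your proposal proves only the final clause of the theorem, and it does so by assuming precisely the assertions that constitute its main content: you take as given that $N$ is a minimal symplectic $4$-manifold of general type and that $Y(N)=-4\pi\sqrt{2c_1^2(N)}$, and you never address the equalities $Y(N)=Y^{orb}(\wh M)=Y^{orb}(\wh M')$ at all. As a proof of the stated theorem this is circular. What is missing is exactly the hard part of the paper's argument: (i) the bound $\inf_g\int_N s^2_g\,d\mu_g\le 32\pi^2 c_1^2(N)$, which no Seiberg-Witten argument can supply (SW curvature estimates only bound $Y$ from above); the paper obtains it in Theorem \ref{metrics} by gluing the orbifold K\"ahler-Einstein metric on $\wh M'$, furnished by R.~Kobayashi's solution of the orbifold Calabi conjecture \cite{kob}, to rescaled ALE Ricci-flat K\"ahler metrics on the rational homology balls $B_{n_i,m_i}$ (Theorem \ref{met-rhb}) and ALE hyperk\"ahler metrics on the $ADE$-spaces; (ii) minimality of $N$, which the paper deduces from that same metric family together with the curvature bound of Theorem \ref{CL-bound} (see the proof of minimality in Proposition \ref{prop22}), and the general type property, which uses the orbifold K\"ahler-Einstein metric and Symington's symplectic structure (Proposition \ref{prop2}); (iii) the orbifold statements, which require Proposition \ref{yam-rat-ball} (positivity of $Y^{orb}$ of the conformally compactified rational homology ball $\wh B_{n,m}$) fed into Akutagawa's orbifold connected-sum theorems \cite{aku}, plus the observation that the orbifold K\"ahler-Einstein metric on $\wh M'$ realizes the invariant. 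None of these steps can be reconstructed from the tools in your proposal.

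Two comments on the clause you did treat. First, you assume $b_+(N)>1$, which is not a hypothesis of the theorem; symplectic manifolds of general type may have $b_+=1$, and in that case the monopole classes $c_1(N)+\sum\epsilon_iE_i$ and their estimates require the chamber condition $c_1\cdot[\omega]<0$ --- this is exactly the form in which the paper quotes LeBrun's estimate (Theorem \ref{CL-bound}), so you should cite it as such rather than rederive it under a stronger hypothesis; your averaging computation is in any case a reproof of that cited theorem. Second, granting the first part of the theorem, your argument for $Y(N')\ge Y(N)$ is correct but follows a genuinely different route from the paper: you invoke O.~Kobayashi's connected-sum inequality \cite{oKob}, $Y(N\# l\cpb)\ge\min\{Y(N),Y(l\cpb)\}$ together with $Y(l\cpb)>0$, whereas the paper extends the explicit metric family to $N'$ by gluing in Burns metrics (equation (\ref{mtr-bl}) of Theorem \ref{metrics}). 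Your softer argument is a legitimate substitute for that one step, and buys some economy --- but only for that step; it cannot produce the corresponding estimate on $N$ itself, which is where the real work of the theorem lies.
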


\begin{rmk}
If $M$ as above is a complex surface of general type then $Y(M)<Y(N)<0.$ 
More precisely, $Y(M)^2=Y(N)^2+32\pi^2(\sum_{i=1}^s |C_i|),$ where $|C_i|$ denotes the length of the chain $C_i.$ 
Nevertheless,   $M$ can be a rational surface (see examples in \cite{lp,rs1}), 
or an elliptic surface, as the family of examples in Sect. \ref{non-K-exp}.
\end{rmk}

The geometry of symplectic $4-$manifolds is much richer than that of K\"ahler surfaces. 
One can obtain new examples by considering different types of surgeries, along symplectic
or Langrangian submanifolds. 
These surgeries can be performed within the symplectic category due to Darboux type theorems,
which allow us to establish  canonical symplectic structures in the neighborhood of these submanifolds.
Unfortunately, these techniques are not compatible with the gluing of the Riemannian structures.
In this paper, we only consider one such surgery, the generalized rational blowdown.
In Sect. \ref{constr} we present  an approach to the generalized rational blowdown surgery, 
which can be used in conjunction with a construction of preferred metrics.
This allows us  to compute the Yamabe invariant from Theorem \ref{main} in Sect. \ref{bound}.

In general, it is difficult to decide if the new symplectic manifold constructed in
Theorem \ref{main} admits an integrable complex structure. 
However, the non-existence of such an integrable complex structure follows if the numerical invariants do not satisfy either the Noether, 
or the Miyaoka-Bogomolov-Yau inequality. 
For minimal complex manifolds with even $c_1^2(M)$, the Noether inequality is $5c_1^2(M)-c_2(M)+36\geq0,$  
or equivalently $c_1^2(M)\geq2\chi_h(M)-6,$ where $\chi_h(M)$ is the Todd genus, 
and can be computed in terms of the other topological invariants as
$\chi_h(M)=\frac 1{12}(c_2(M)+c_1^2(M)).$  The minimality of the manifold implies that $c_1^2(M)>0.$
The half-Noether line is defined to be $y=x-3, y>0$  and any minimal manifold $M$ that satisfies 
$c_1^2(M)=2\chi(M)+3\tau(M)=\chi_h(M)-3$  is necessarily non-complex. 
We compute the Yamabe invariant for symplectic $4-$manifolds which lie at any point on
the half-Noether line.

\begin{main}
\label{main2}
For every point on the half-Noether line there is a simply connected, minimal, symplectic $4-$manifold of general type $M,$ for which the Yamabe invariant is
\begin{equation}
Y(M)=-4\pi\sqrt{2c_1^2(M)}.
\end{equation}
Moreover, if $M'=M\#l\cpb$ then $Y(M')=Y(M).$
\end{main}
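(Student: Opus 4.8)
The plan is to derive Theorem~\ref{main2} from Theorem~\ref{main} by exhibiting, for each admissible point of the half-Noether line, an explicit K\"ahler surface to which Theorem~\ref{main} applies and whose generalized rational blowdown is simply connected and sits on that point. Since $\chi_h$ and $c_1^2$ are integers for a manifold carrying an almost-complex structure, the points in question are $(\chi_h,c_1^2)=(k,k-3)$ with $k\in\ZZ$, $k\geq4$ (so that $c_1^2=k-3>0$). It therefore suffices to produce a K\"ahler surface $Z$ carrying linear chains $C_i$ of the type in Theorem~\ref{main} such that: (i) the orbifold $\wh Z'$ obtained after collapsing the chains and the remaining free $(-2)$-curves has ample canonical sheaf; (ii) the generalized rational blowdown $N$ of the $C_i$ is simply connected; and (iii) $\chi_h(N)=k$ and $c_1^2(N)=k-3$. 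Given such a $Z$, Theorem~\ref{main} directly yields that $N$ is a minimal symplectic $4$-manifold of general type with $Y(N)=-4\pi\sqrt{2c_1^2(N)}$, and the last clause $Y(N\#l\cpb)=Y(N)$ is precisely the ``Moreover'' part of Theorem~\ref{main}. Thus Theorem~\ref{main2} reduces entirely to this construction.

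For the construction I would take $Z$ to be a blowup of a simply connected relatively minimal elliptic surface realizing the prescribed holomorphic Euler characteristic $\chi_h=k$, as in the family of Sect.~\ref{non-K-exp}, and carve the chains $C_i$ out of the singular fibers, which supply the $A_{n_i-1}$-strings of $(-2)$-curves, together with auxiliary blown-up curves providing the $-(n_i+2)$ ends. The numerology is governed by Novikov additivity for the signature and additivity of the Euler characteristic: the generalized rational blowdown of a chain of length $|C_i|$ changes $(\chi,\tau)$ by $(-|C_i|,+|C_i|)$, hence leaves $\chi_h=\tfrac1{12}(c_1^2+\chi)$ unchanged and shifts $c_1^2=2\chi+3\tau$ by $\sum_i|C_i|$, in accordance with the Remark following Theorem~\ref{main}. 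Because $\chi_h$ is a blowdown invariant, choosing the base with $\chi_h=k$ fixes $\chi_h(N)=k$; the number of auxiliary blowups and the total chain length $\sum_i|C_i|$ then enter $c_1^2(N)$ with opposite signs, and one solves the resulting linear relation to arrange $c_1^2(N)=k-3$. As $c_1^2$ of the base can be driven arbitrarily negative by blowing up while $\chi_h$ is held fixed, there is room to hit every integer $k\geq4$.

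It remains to verify hypotheses (i) and (ii). For (i), although $Z$ itself need not be of general type---it may be rational or elliptic, as the Remark emphasizes---collapsing the chains to the Wahl singularities $\tfrac1{n_i^2}(1,n_im_i-1)$ and contracting the free $(-2)$-curves can raise the Kodaira dimension of the orbifold to $2$. I would establish ampleness of $K_{\wh Z'}$ by an orbifold Nakai--Moishezon computation, checking that the orbifold canonical class has positive self-intersection and positive degree on every remaining curve, the free $(-2)$-curves being exactly the $K_Z$-trivial ones that must disappear and no $K$-trivial or $K$-negative curve surviving. For (ii), since $Z$ is simply connected, van Kampen's theorem presents $\pi_1(N)$ as a quotient of $\pi_1(Z\setminus\bigcup_i U_i)$ by the relations imposed by the glued rational homology balls $B_{n_i,m_i}$; the latter have fundamental group the cyclic $\ZZ/n_i$ carried by the image of $\pi_1(L(n_i^2,n_im_i-1))=\ZZ/n_i^2$, and arranging each chain so that its meridian is null-homotopic in the complement kills these cyclic factors and forces $\pi_1(N)=1$ (compare the rational-blowdown computations of \cite{lp,rs1}). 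With (i) and (ii) in hand, (iii) follows from the numerology above and Theorem~\ref{main} completes the argument.

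The main obstacle is the simultaneous fulfillment of these constraints across the \emph{entire} line rather than at sporadic points: for every $k\geq4$ one must produce a single K\"ahler surface whose configuration of rational curves is rich enough to support Wahl chains realizing $c_1^2(N)=k-3$, whose orbifold canonical model $\wh Z'$ is genuinely of general type, and whose chain complement is simply connected so that the rational homology balls contribute no fundamental group. Requirement (ii) pushes toward having many well-placed sections and curves to trivialize the meridians, whereas (i) and (iii) pin down the global numerical type; reconciling these uniformly in $k$ is the crux, and it is here that a single, explicit choice of elliptic base as in Sect.~\ref{non-K-exp} does the decisive work.
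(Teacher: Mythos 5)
Your reduction of Theorem~\ref{main2} to Theorem~\ref{main} is exactly the paper's strategy, and your numerology is right: the blowdown of a chain of length $|C_i|$ changes $(\chi,\tau)$ by $(-|C_i|,+|C_i|)$, hence preserves $\chi_h$ and raises $c_1^2$ by $|C_i|$, and the points to be realized are $(\chi_h,c_1^2)=(k,k-3)$, $k\geq4$. The problem is that your proof stops where the paper's work begins. The two hypotheses of Theorem~\ref{main} --- ampleness of the orbifold canonical sheaf and simple connectivity of the blowdown --- are never verified for any concrete surface: you state how one ``would'' check them, and your final paragraph concedes that reconciling them uniformly in $k$ ``is the crux.'' That crux is precisely the content of Sect.~\ref{non-K-exp}. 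There, for each $n\geq4$, one takes the Lee--Park complex structure on the elliptic surface $E(n)$, which \emph{already} contains two disjoint chains of type $(-n,-2,\dots,-2)$ whose heavy ends are the two $(-n)$-sections (no auxiliary blowups of $E(n)$ are performed), rationally blows down exactly one of them, and contracts the $(-2)$-curves of the other. Proposition~\ref{orb-ampl} then computes, via the double-cover description of $E(n)$ over a blown-up Hirzebruch surface, that $K_{\wh X_n}=(n-2)\wh F$ with $\wh F^2=(n-3)/(n-2)^2$, so $c_1^2(\wh X_n)=n-3$, and proves ampleness by a Nakai--Moishezon check on the components $\wh C_0,\wh E_1,\wh E_2$ of the special fiber; Proposition~\ref{exmp2} proves $\pi_1(Y_n)=1$ by exhibiting a loop generating $\pi_1(B_{n,m})=\ZZ/n\ZZ$ embedded in the exceptional curve transversal to the collapsed chain, hence null-homotopic in the complement. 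None of this verification appears in your proposal, and without it the theorem is not proved at a single point of the line, let alone all of them.

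The specific construction you do sketch also deviates from the paper's in a way that makes the missing verification harder, not easier: creating the $-(n_i+2)$ ends by auxiliary blowups of the elliptic surface introduces $(-1)$-curves meeting your chains, and after contraction one must show $K_{\wh Z'}$ has strictly positive degree on their images --- exactly the kind of check that can fail and that you have not attempted for any explicit configuration. (The paper faces an analogue of this with $\wh E_1,\wh E_2$ and resolves it by explicit intersection computations.) Likewise, your van Kampen argument for $\pi_1(N)=1$ correctly identifies what must be shown --- that a meridian generating the image of $\pi_1(L(n_i^2,n_im_i-1))$ dies in the chain complement --- but ``arranging each chain'' so that this holds is again a property of a specific configuration, supplied in the paper by the transversal exceptional curve. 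In short: the reduction and the arithmetic are sound and agree with the paper, but the existence statement that Theorem~\ref{main2} actually asserts is carried entirely by Propositions~\ref{orb-ampl} and~\ref{exmp2}, which your proposal replaces by a plan.
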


\medskip

The above results rely on a special feature of the linear chains of curves considered.
The boundary of a neighborhood of the linear chain $C_i$ is the lens space 
$L(n_i^2, n_im_i-1).$
Essential in the construction is the existence of a rational homology ball 
$B_{n_i,m_i}$ which also has boundary $L(n_i^2,n_im_i-1)$ (for more details see Sect. \ref{constr}). 
To simplify the notation, we denote by $B_{n,m}$ the rational homology ball with boundary $L(n,nm-1)$
used in the generalized rational blowdown surgery.
On these spaces, the author has proved the existence of preferred metrics:

\begin{thm}\cite{ricflat}
\label{met-rhb}
The rational homology ball $B_{n,m}$ admits an asymptotically locally Euclidean (ALE) Ricci flat K\"ahler metric $g_{n,m}.$
Moreover, any ALE Ricci flat K\"ahler surface $(X,g)$ with Euler characteristic $\chi(X)=1$ is isomorphic, up to rescaling, to 
$(B_{n,m}, g_{n,m}),$ for some integers $0<m<n, \gcd(n,m)=1.$ 
\end{thm}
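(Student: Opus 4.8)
The plan is to read both halves of the theorem off a single principle, namely that a Ricci-flat K\"ahler surface has flat canonical bundle, and then to compare against two external classifications: Kronheimer's description of ALE hyperk\"ahler four-manifolds and the Koll\'ar--Shepherd-Barron list of $\QQ$-Gorenstein smoothable quotient surface singularities. The governing dichotomy is this: when the Ricci form vanishes, $K$ is flat; if the surface is simply connected this trivializes $K$, forces the holonomy into $SU(2)=Sp(1)$, and makes the metric hyperk\"ahler, whereas a nontrivial fundamental group allows $K$ to carry a finite monodromy character, so the holonomy lies in $U(2)$ but not in $SU(2)$ and the metric is K\"ahler without being hyperk\"ahler. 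The rational homology balls realize the second case, with $\pi_1(B_{n,m})=\ZZ/n$.

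For existence I would descend a Kronheimer metric. The group $\ZZ/n^2\subset U(2)$ acting by $\frac1{n^2}(1,nm-1)$ contains the subgroup $\ZZ/n\subset SU(2)$ generated by its $n$th power, acting as $\frac1n(1,n-1)$ and cutting out the $A_{n-1}$ Du Val singularity $\CC^2/(\ZZ/n)$; the residual group $\ZZ/n\cong(\ZZ/n^2)/(\ZZ/n)$ then acts on the $A_{n-1}$ ALE space $\wt X$. A generator fixes the complex structure $I$ while scaling the holomorphic volume form by $e^{2\pi i m/n}$, which is primitive precisely because $\gcd(n,m)=1$; the $\ZZ/n$-action is the deck action of the canonical $n$-fold cover $\wt X\to B_{n,m}$ of Milnor fibers and is therefore fixed-point free, with quotient $B_{n,m}$. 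It remains to pick the Kronheimer periods $\vec\zeta$ invariant under the combined action---a rotation of the $(\omega_J,\omega_K)$-plane together with the deck action on $H^2(\wt X)$---with the $\omega_I$-period taken to be zero (so that $\wt X$ is the smoothing and $\omega_I$ is exact, consistent with $b_2(B_{n,m})=0$) and the remaining data nondegenerate; averaging over the cyclic action produces such a $\vec\zeta$. The descended metric $g_{n,m}$ is then ALE, Ricci-flat, and K\"ahler for $I$.

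For the classification I would take $(X,g)$ ALE, Ricci-flat K\"ahler with $\chi(X)=1$, and pass to the universal cover $\wt X$, which is simply connected ALE Ricci-flat K\"ahler and hence hyperk\"ahler; Kronheimer's theorem identifies $\wt X$ as an ADE ALE space, and multiplicativity $\chi(\wt X)=|\pi_1(X)|\geq 2$ forces $\pi_1(X)\neq 1$, so $X$ is a genuine quotient. Flatness of $K_X$ together with $\pi_1(X)\ne 1$ (so that no crepant resolution exists) exhibits $X$ as a $\QQ$-Gorenstein smoothing of the quotient singularity $\CC^2/\Gamma$ at infinity, where $S^3/\Gamma$ is the spherical space form at the end. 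By Koll\'ar--Shepherd-Barron such a singularity is either Du Val or cyclic of type $\frac1{dn^2}(1,dna-1)$; the Milnor fiber of the former has $\chi$ equal to the number of Dynkin nodes plus one, and of the latter has $\chi=d$, so $\chi=1$ eliminates the Du Val case and every $d>1$, leaving exactly $\frac1{n^2}(1,nm-1)$ with $0<m<n$ and $\gcd(n,m)=1$. Finally, since $b_2(B_{n,m})=0$ there are no period moduli, so the ALE analogue of Calabi's uniqueness theorem shows the Ricci-flat K\"ahler metric in this class is unique up to rescaling, giving $(X,g)\cong(B_{n,m},g_{n,m})$.

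The main obstacle is concentrated in the two classification inputs and in the descent. On the existence side the delicate point is producing an invariant, nondegenerate period vector so that the Kronheimer metric genuinely descends to a smooth rather than orbifold quotient; controlling the resulting non-hyperk\"ahler holonomy and confirming the topological identification of the quotient with $B_{n,m}$ also take care. On the uniqueness side the heart of the matter is excluding every non-cyclic possibility for $\Gamma$---the binary dihedral and binary polyhedral groups, whose singularities are not $\QQ$-Gorenstein smoothable unless Du Val---which is exactly what Koll\'ar--Shepherd-Barron supplies; without it one would have to rule out by hand all free holomorphic isometric cyclic actions on the $D$- and $E$-type ALE spaces producing a $\chi=1$ quotient. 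The ALE uniqueness theorem, needed to upgrade the complex-geometric classification to a Riemannian one, is the final analytic ingredient.
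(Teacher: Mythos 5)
First, a point of order: this paper never proves Theorem \ref{met-rhb}; it is imported verbatim from \cite{ricflat}, so your proposal has to be measured against the proof given there. Your existence half is essentially that proof: in \cite{ricflat} the metric $g_{n,m}$ is obtained by descending an ALE hyperk\"ahler metric on the $A_{n-1}$ space through a free $\ZZ/n$-action that fixes one complex structure $I$ and rotates the holomorphic symplectic form by a primitive character, with the forced period data $\zeta_I=0$ and the holomorphic period an eigenvector of the deck action; the points you flag (freeness, nondegeneracy of the invariant periods, identification of the quotient with the Koll\'ar--Shepherd-Barron Milnor fiber) are exactly the ones checked there.

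The genuine gap is in your classification half, at the sentence asserting that flatness of $K_X$ together with $\pi_1(X)\neq 1$ ``exhibits $X$ as a $\QQ$-Gorenstein smoothing of the quotient singularity $\CC^2/\Gamma$ at infinity.'' This is not a deduction. The theorem of \cite{ksb} classifies which quotient singularities \emph{admit} $\QQ$-Gorenstein smoothings; to invoke it you must already know that your abstract ALE surface $X$ occurs as the Milnor fiber of a flat one-parameter degeneration of $\CC^2/\Gamma$ whose total space is $\QQ$-Gorenstein. Being ALE-asymptotic to the cone $\CC^2/\Gamma$ does not produce such a degeneration for free: constructing it means equivariantly degenerating the universal cover $\wt X$ to $\CC^2/\Gamma_0$, dividing by the deck group, and verifying the $\QQ$-Gorenstein condition on the total space, which already requires the detailed complex-analytic structure of $\wt X$ and of the action, i.e., precisely the direct classification of free holomorphic isometric cyclic actions on ADE ALE spaces that your shortcut was meant to avoid. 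Concretely, nothing in your argument as written excludes a free $\ZZ/5$-action on the $D_4$ ALE space or a free $\ZZ/7$-action on the $E_6$ ALE space: in both cases the candidate group at infinity ($Q_8\times\ZZ/5$, respectively $T^*\times\ZZ/7$, with coprime orders) genuinely acts freely on $S^3$, so no contradiction is visible at the end. Ruling these out needs a different mechanism, for instance: the deck group must act faithfully on $H^2(\wt X,\ZZ)$, which is the corresponding root lattice, with every nontrivial element of trace $-1$ (Lefschetz, since the action is free and the quotient has $b_2=0$), while the automorphism group of the $D_4$ lattice has order $2^7\cdot3^2$ and that of the $E_6$ lattice order $2^8\cdot3^4\cdot5$, so neither contains an element of order $5$, respectively $7$. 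An argument of this kind is what \cite{ricflat} supplies; in that proof Koll\'ar--Shepherd-Barron enters with the opposite role, namely to identify the surviving $A$-type quotients with the Milnor fibers $B_{n,m}$, not to constrain the group at infinity.

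Two smaller defects. The step ``$\chi(\wt X)=|\pi_1(X)|\geq 2$ forces $\pi_1(X)\neq 1$'' is circular: flat $\CC^2$ is ALE Ricci-flat K\"ahler with $\chi=1$ and trivial fundamental group, and it must be excluded by hypothesis or convention (nontrivial group at infinity), not by Euler-characteristic multiplicativity. And your final appeal to an unspecified ``ALE analogue of Calabi's uniqueness theorem'' is vaguer than necessary: the clean route to uniqueness up to rescaling is to lift both metrics to the universal cover and apply Kronheimer's Torelli theorem (the companion to \cite{kron1}), since the deck-invariant periods are forced up to complex scale.
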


To prove this theorem it was essential to require that metric is asymptotically locally Euclidean 
(see Sect. \ref{constr} for the exact definition). 
This property allows us to construct a one point conformal compactification of $B_{n,m}$, 
which has positive orbifold Yamabe invariant, which in turn is used to prove Theorem \ref{main}.
More precisely, we prove the following proposition which is of independent interest:
\begin{propo}
\label{yam-rat-ball}
The rational homology ball $(B_{n,m}, g_{n,m})$ admits a one-point conformal compactification   
$(\wh B_{n,m},\wh g_{n,m})$ with positive orbifold conformal Yamabe
$$Y^{orb}(\wh B_{n,m},[\wh g_{n,m}])=Y(S^4)/n=\frac{8\pi\sqrt6}{n}.$$
Moreover, there is no solution to the orbifold Yamabe problem on $(\wh B_{n.m}, [\wh g_{n,m}]),$ 
i.e. there is no conformal metric $\wt g=e^f \wh g_{n,m}, f\in C^\infty(\wh B_{n,m})$ having constant scalar curvature.
\end{propo}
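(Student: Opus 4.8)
The plan is to reduce the orbifold Yamabe problem on the compactification to a \emph{sharp Sobolev inequality} on the complete scalar-flat space $(B_{n,m},g_{n,m})$, whose asymptotic model is the flat cone $\RR^4/\Gamma$, where $\Gamma\cong\ZZ_{n^2}$ is the cyclic group with $S^3/\Gamma=L(n^2,nm-1)$. First I would construct the compactification by inverting the ALE end. Fix asymptotically Euclidean coordinates in which $g_{n,m}=\delta+O(\rho^{-4})$ (Ricci-flatness in dimension four forces order-$4$ decay), let $\rho$ be the radial coordinate, and take $u>0$ smooth with $u=\rho^{-2}$ outside a compact set. Because the decay is of order $4$, in the inverted chart $y=x/|x|^2$ the metric $\wh g_{n,m}:=u^2 g_{n,m}$ extends across the added point $\infty$ as a smooth orbifold metric with uniformizing group $\Gamma$; this produces $(\wh B_{n,m},\wh g_{n,m})$, a compact orbifold with a single singular point. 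Adjusting $u$ in the interior one may arrange $s_{\wh g_{n,m}}\ge 0$, so that positivity of $Y^{orb}$ will follow once its value is identified.

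The key simplification is that, since $g_{n,m}$ is scalar-flat, the conformal functional collapses to a pure Dirichlet quotient. For any $\wt g=\phi^2 g_{n,m}$ in the conformal class, conformal invariance of the Yamabe functional gives
\begin{equation*}
E(\wt g)=\frac{\int_{B_{n,m}} s_{\wt g}\,dV_{\wt g}}{\big(\mathrm{Vol}_{\wt g}\big)^{1/2}}
=\frac{6\int_{B_{n,m}}|d\phi|^2_{g_{n,m}}\,dV_{g_{n,m}}}{\big(\int_{B_{n,m}}\phi^4\,dV_{g_{n,m}}\big)^{1/2}},
\end{equation*}
the zeroth-order term $s_{g_{n,m}}\phi^2$ dropping out. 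Hence $Y^{orb}(\wh B_{n,m},[\wh g_{n,m}])=6\,C_S$, where $C_S$ is the sharp constant of the embedding $\dot W^{1,2}(B_{n,m})\hookrightarrow L^4(B_{n,m})$, and the admissible $\phi$ (those $\sim\rho^{-2}$ at infinity) are exactly the conformal factors keeping $\wt g$ a genuine orbifold metric after compactification. Testing with Aubin--Talenti bubbles supported near the end, which in the concentration limit see only the flat model $\RR^4/\Gamma$, yields $C_S\le Y(S^4)/(6\,|\Gamma|^{1/2})$, i.e. the upper bound $Y^{orb}\le Y(S^4)/n=8\pi\sqrt6/n$.

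The heart of the matter, and the step I expect to be the main obstacle, is the matching lower bound $Y^{orb}\ge Y(S^4)/n$, equivalently the sharp Sobolev inequality on $(B_{n,m},g_{n,m})$ with the \emph{same} constant as its model $\RR^4/\Gamma$. On a compact manifold Schoen's argument would produce a strict improvement over the sphere value, governed by the sign of a mass term; here the analogous correction in the bubble expansion is precisely the ADM-type mass of the ALE end, which vanishes because an ALE Ricci-flat four-manifold decays to order $4$. This vanishing is exactly what forbids any strict decrease below $Y(S^4)/n$ and pins the invariant at the model value. I would make this rigorous by a concentration--compactness analysis of a minimizing sequence: no mass can escape to a smooth interior point (where the competing value $Y(S^4)$ is strictly larger, hence non-optimal), and concentration at the orbifold point cannot beat the model because the order-$4$ decay annihilates the leading correction.

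Finally, for non-attainment, suppose a unit-volume minimizer $\wt g$ existed. It would be a positive solution of the orbifold Yamabe equation realizing the sharp Sobolev constant of $(B_{n,m},g_{n,m})$. By the rigidity (equality) case of the sharp Sobolev inequality on an ALE space, such an extremal would force $(B_{n,m},g_{n,m})$ to be isometric to the flat cone $\RR^4/\Gamma$, contradicting the fact, supplied by Theorem \ref{met-rhb}, that $B_{n,m}$ is a non-trivial rational homology ball carrying a genuinely non-flat Ricci-flat K\"ahler metric. Therefore the infimum $Y(S^4)/n$ is approached only by bubbling at the singular point and is never attained, which establishes the second assertion.
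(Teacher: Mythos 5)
Your overall architecture---upper bound from bubbles at the singular point, a concentration--compactness dichotomy for minimizing sequences, and then exclusion of a minimizer---is the same skeleton as the paper's proof, which cites \cite[Theorem B]{AkBo} for the upper bound $Y^{orb}(\wh B_{n,m},[\wh g_{n,m}])\leq Y(S^4)/n$ and \cite[Theorem 3.1]{aku} for the statement that strict inequality would force the existence of a constant scalar curvature metric in the conformal class. Up to that point your proposal is essentially sound, modulo one technical remark: the $C^\infty$ orbifold compactification is not automatic from the $O(\rho^{-4})$ decay (naive inversion yields only $C^{1,\alpha}$ regularity, which is why the paper invokes \cite[Proposition 12]{clw}). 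The genuine gap is in the step that does all the work: excluding the minimizer.

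Both your lower bound and your non-attainment claim rest on an alleged ``rigidity (equality) case of the sharp Sobolev inequality on an ALE space,'' namely that an extremal on $(B_{n,m},g_{n,m})$ would force this space to be isometric to the flat cone $\RR^4/\Gamma$. No such theorem is available, and you do not prove it. The Aubin--Talenti classification of extremals is a statement about $\RR^n$ itself, not about curved ALE backgrounds, and the natural route to such a rigidity statement---a positive-mass-type argument---is precisely what breaks down here: the positive mass theorem is false in the ALE category, as shown by LeBrun's negative-mass ALE scalar-flat K\"ahler metrics on $\OO_{\bcp^1}(-k)$, $k\geq 3$. Vanishing of the mass of $g_{n,m}$ only kills the second-order term in the bubble expansion at the orbifold point; it shows that bubbles concentrated there do not dip below $Y(S^4)/n$, but it leaves the borderline case completely open and does not exclude the scenario $Y^{orb}<Y(S^4)/n$ with the infimum realized by a minimizer on this non-flat space. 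In effect, your final paragraph assumes the hardest part of the proposition. The paper closes this gap with a structure-specific argument your proposal never touches: since $\pi_1(B_{n,m})=\ZZ/n\ZZ$ and the universal cover of $B_{n,m}$ is the ALE hyperk\"ahler manifold $A_{n-1}$, a constant scalar curvature metric $e^f\wh g_{n,m}$ would pull back under the covering map $c:A_{n-1}\to B_{n,m}$ to a constant scalar curvature metric in the orbifold compactified conformal class of the hyperk\"ahler metric $c^*g_{n,m}$, and Viaclovsky proved that no such metric exists \cite[Section 4.1]{Via}. That lifting argument---not flatness rigidity---is what simultaneously yields $Y^{orb}(\wh B_{n,m},[\wh g_{n,m}])=Y(S^4)/n$ and the non-attainment statement. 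To repair your proof you would need either to reproduce this lift to the hyperk\"ahler cover (where the non-existence proof genuinely uses the hyperk\"ahler structure), or to give an independent proof of your rigidity claim exploiting the Ricci-flat K\"ahler geometry of $g_{n,m}$; treating $(B_{n,m},g_{n,m})$ merely as a scalar-flat ALE space with zero mass cannot suffice.
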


This proposition is an extension of Viaclovsky's result on ALE hyperk\"ahler 4-manifolds \cite[Theorem 1.3]{Via}.

\medskip

Behind most of the results in this paper is the Seiberg-Witten theory. 
This is essential in the computation of the invariant in Theorem \ref{Yam-Kah} and the curvature bound \ref{Y-bound}. 
For Propositions \ref{prop2} and \ref{prop22}, where Seiberg-Witten theory can be used in the proofs, we give alternative, topological or differential geometric arguments. 

At the time when this paper was written, Su\'arez-Serrato and Torres  \cite{tor} have posted a result on the computation of the
Yamabe invariant for symplectic $4-$manifolds of Kodaira dimension $0$ and $1.$
\section{The generalized rational blowdown surgery}
\label{constr}

The (generalized) rational blowdown is a surgery construction specific to dimension $4$ 
introduced by Fintushel and Stern \cite{FiSt}.
 It is the analog of the  blow-down construction in algebraic geometry, 
 and it is motivated by the study of the local structure of cyclic quotient singularities.
\medskip

In complex dimension two, quotient singularities of finite cyclic groups and their minimal resolution are well understood. 
We denote by  $\CC^2/\frac1p(1,q)$ the quotient of $\CC^2$ by the finite cyclic group of order $p$ 
acting diagonally with weights $1$ and $q,$ for $q<p$  relatively prime integers.
We say that we have a singularity of type $\frac 1p(1,q).$ 
Its minimal resolution consists of a connected chain of transversal rational curves 
$E_1,\cdots, E_k$, with the self-intersection $E_i^2=-e_i, e_i\geq2$ given 
by the Hirzebruch-Jung continued fraction 
\begin{equation}
  \frac pq = e_1 - \cfrac{1}{e_2-
         \cdots
           \cfrac{1}{e_k } } .
\end{equation}
 A neighborhood of infinity is of the form $L(p,q)\times (T,\infty),$ where $L(p,q)$ denotes the lens space 
 associated to $\CC^2/\frac1p(1,q)$ and $T\gg0$. 
 In the case when the singularity is of type $\frac 1{n^2}(1,nm-1),$ with $n, m$ relatively prime integers, 
 Koll\'ar and Shepherd-Barron \cite{ksb} exhibit a one-parameter smoothing 
for which the associated Milnor fiber is a rational homology ball $B_{n,m}.$ 
 The induced complex structure $J$ on $B_{n,m}$ has the property that the n$^{th}$ tensor product
 of the canonical line bundle, $K_{B_{n,m}}^n,$ is trivial.
 \medskip

Let $M$ be a $4-$manifold containing a linear chain $C_{n,m}$ of embedded $2-$spheres  
$E_1, \cdots, E_k,$ which have negative self-intersections $(-e_1,\cdots,-e_k),$ where $e_i$ are 
obtained by the continued fraction expansion of  $\frac {n^2}{nm-1}.$ 
We construct a manifold $N$ by removing a small tubular neighborhood of the configuration of spheres, 
and gluing in the rational homology ball $B_{n,m},$ 
by identifying the two neighborhoods of infinity which are of the form 
$L(n^2, nm-1)\times (T,T')\subset L(n^2, nm-1)\times (T,\infty).$
 This surgery was introduced by Fintushel and Stern \cite{FiSt}  in the  case when $m=1,$
 and it is called the rational blowdown.
 In general, the construction is known as the generalized rational blowdown. 
 Symington  \cite{sym1,sym2} proved that when $M$ is a symplectic manifold
 and the chain $C_{n,m}$ is formed by symplectic submanifolds, than the new manifold $N$ admits a symplectic structure. 
 Throughout this paper, we do not  emphasize a particular choice of a symplectic structure. 
 The existence of one such structure is sufficient to prove the curvature estimate (\ref{CL-bound}).

\medskip

One can also consider K\"ahler surfaces $M$ containing  configurations of complex submanifolds 
given by  linear chains of rational curves in one of the above arrangements. 
A third manifold which is associated in a natural way to this construction is obtained by 
collapsing the chains of curves to singular points. 
We denote this by $\wh M.$
Artin's Contractibility Criterion tells us that $\wh M$ has a complex structure, and as we used preferred configurations,
the singularities are of type $\frac 1{n^2}(1,nm-1).$
Locally, the generalized rational blowdown can be described as the $\QQ-$Gorenstein smoothing of the quotient singularity \cite{man}.
In general, it is difficult to determine if the associated manifold $N$ is K\"ahler or not. 
In particular cases, as the examples in \cite{lp,rs1}, one can show that a global $\QQ-$Gorenstein smoothing 
of  $\wh M$ exists and conclude that the manifold $N$ is in fact K\"ahler. 
In Sect. \ref{non-K-exp} we give a family of examples for both cases, when the surgery yields K\"ahler, or non-K\"ahler manifolds.

\medskip

Next, we emphasize some of the relations between the properties of the three manifolds. 
Propositions \ref{prop1}, \ref{prop2} and \ref{prop22} are stated for  $M$ and $\wh M$  K\"ahler surfaces as this is sufficient for our purposes. 
Similar statements hold for symplectic $4-$ manifolds, with almost identical proofs.

\begin{prop}
\label{prop1}
 Let $M$ be a K\"ahler surface containing $k$ linear chain configurations of negative self-intersection rational curves $C_i, i=1,\cdots,s$, associated to singularities of type $\frac1{n_i^2}(1, n_im_i-1),$ with $n_i>m_i$ and relatively prime.
Let $N$ denote the symplectic manifold obtained from $M$ by taking the generalized rational blowdown of these configurations of curves.
Let $\wh M$ be the complex orbifold surface obtained by collapsing these configurations to orbifold points.
Then:
\begin{itemize}
\item[(1)]
\begin{equation}
c_1^2(\wh M)=c_1^2(N)
\end{equation}
\item[(2)] If all the $m_i=1$ then 
\begin{eqnarray}
c_1^2(N)=c_1^2(M)+(\sum_{i=1}^{s} (n_i-1)), \notag\\
c_2(N)=c_2(M)-(\sum_{i=1}^{s} (n_i-1))
\end{eqnarray}
\end{itemize}
\end{prop}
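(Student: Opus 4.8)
The plan is to reduce both asserted identities to numerical comparisons by expressing $c_1^2$ and $c_2$ of $N$ and of $\wh M$ relative to the invariants of $M$. Throughout I would use the following local data, all recorded earlier in the paper: the boundary of a tubular neighborhood $\nu(C_i)$ is the lens space $L(n_i^2,n_im_i-1)$, a rational homology sphere; the rational homology ball $B_{n_i,m_i}$ glued in its place satisfies $\chi(B_{n_i,m_i})=1$ and $\tau(B_{n_i,m_i})=0$, since its rational homology is that of a point; and the intersection form of the plumbing $\nu(C_i)$ is the resolution graph of $\frac1{n_i^2}(1,n_im_i-1)$, hence negative definite of rank $|C_i|$, so $\chi(\nu(C_i))=|C_i|+1$ and $\tau(\nu(C_i))=-|C_i|$.

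First I would dispatch part (2), which is purely topological. As $N$ is obtained from $M$ by excising the $\nu(C_i)$ and regluing the $B_{n_i,m_i}$ along rational homology spheres, additivity of the Euler characteristic (Mayer--Vietoris, using that lens spaces have vanishing Euler characteristic) and Novikov additivity of the signature give
\begin{equation}
\chi(N)=\chi(M)-\sum_{i=1}^s|C_i|,\qquad \tau(N)=\tau(M)+\sum_{i=1}^s|C_i|.
\end{equation}
Since $N$ is symplectic, $c_2(N)=\chi(N)$ and $c_1^2(N)=2\chi(N)+3\tau(N)$, which already yields the general formulas $c_2(N)=c_2(M)-\sum_i|C_i|$ and $c_1^2(N)=c_1^2(M)+\sum_i|C_i|$ for arbitrary $m_i$. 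To obtain the stated form it remains to identify $|C_i|$ when $m_i=1$: the Hirzebruch--Jung expansion of $\frac{n^2}{n-1}$ has first entry $n+2$ followed by $n-2$ entries equal to $2$, so $|C_i|=n_i-1$, and part (2) follows.

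For part (1) I would compare $c_1^2(\wh M)$ with $c_1^2(M)$ through the minimal resolution $\pi\colon M\to\wh M$. Writing $K_M=\pi^*K_{\wh M}+\sum_{i,j}a_{ij}E_{ij}$, with $a_{ij}$ the (negative, log-terminal) discrepancies of the exceptional curves of the $i$-th chain, the projection formula $\pi^*K_{\wh M}\cdot E_{ij}=0$ gives $K_M\cdot\sum_j a_{ij}E_{ij}=(\sum_j a_{ij}E_{ij})^2$; since $K_{\wh M}^2=(\pi^*K_{\wh M})^2$ and curves in distinct chains are disjoint, expanding $(\pi^*K_{\wh M})^2=(K_M-\sum_{i,j}a_{ij}E_{ij})^2$ yields
\begin{equation}
c_1^2(\wh M)=(\pi^*K_{\wh M})^2=c_1^2(M)-\sum_{i=1}^s\Big(\sum_j a_{ij}E_{ij}\Big)^2.
\end{equation}
Comparing this with the general formula $c_1^2(N)=c_1^2(M)+\sum_i|C_i|$ established above, the proposition reduces to the single local identity $\big(\sum_j a_{ij}E_{ij}\big)^2=-|C_i|$ for the resolution of each singularity $\frac1{n_i^2}(1,n_im_i-1)$.

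The \emph{main obstacle} is precisely this local identity, and it is where the special nature of the chains enters. It is the local incarnation of the deformation invariance of $K^2$ under the $\QQ$-Gorenstein smoothing of $\frac1{n_i^2}(1,n_im_i-1)$ whose Milnor fiber is $B_{n_i,m_i}$: this smoothing preserves $K^2$, while the smooth fiber, being a rational homology ball with $K_{B_{n_i,m_i}}^{\,n_i}$ trivial, carries no self-intersection of the canonical class, forcing the discrepancy defect of the special fiber to equal $-|C_i|$. That the genuine T-singularity structure is indispensable is visible from non-examples: for the canonical singularity $A_2=\frac13(1,2)$ the discrepancy defect vanishes whereas $|C|=2$. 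I would establish the identity either by invoking this deformation invariance directly, or, for a self-contained argument, by induction on the Hirzebruch--Jung continued fraction of $\frac{n_i^2}{n_im_i-1}$: solving the linear system $\sum_k a_{ik}E_{ik}\cdot E_{ij}=e_{ij}-2$ and using adjunction to evaluate $\big(\sum_j a_{ij}E_{ij}\big)^2=\sum_j a_{ij}(e_{ij}-2)$. Once this identity is in hand, parts (1) and (2) follow at once.
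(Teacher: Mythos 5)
Your part (2) is essentially the paper's own argument: excise the plumbings, use Mayer--Vietoris for $\chi$ and Novikov additivity for $\tau$ across the lens-space boundaries, use $\chi(B_{n_i,m_i})=1$, $\tau(B_{n_i,m_i})=0$, and identify $|C_i|=n_i-1$ from the Hirzebruch--Jung expansion of $n_i^2/(n_i-1)$; your general formulas $c_2(N)=c_2(M)-\sum_i|C_i|$ and $c_1^2(N)=c_1^2(M)+\sum_i|C_i|$ are correct. For part (1), however, you take a genuinely different route. The paper argues differential-geometrically: it writes $c_1^2(\wh M)=2\chi^{orb}(\wh M)+3\tau^{orb}(\wh M)$, converts orbifold to topological invariants using the group orders $n_i^2$ and Park's eta-invariant computation $\eta\bigl(S^3/\tfrac1{n^2}(1,nm-1)\bigr)=\tfrac23\bigl(1-\tfrac1{n^2}\bigr)$ from \cite{park}, observes that the two correction terms cancel exactly for these singularities, and then identifies $2\chi^{top}(\wh M)+3\tau^{top}(\wh M)$ with $2\chi(N)+3\tau(N)$ by the same excision argument as in part (2). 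You instead work algebro-geometrically on the resolution $\pi\colon M\to\wh M$, and your reduction of part (1) to the local discrepancy identity $\bigl(\sum_j a_{ij}E_{ij}\bigr)^2=-|C_i|$ is valid (modulo the routine identification of the orbifold characteristic number $c_1^2(\wh M)$ with the Mumford self-intersection $K_{\wh M}^2$); the identity itself is true --- it is the standard ``$K^2$-defect equals chain length'' property of Wahl singularities --- and your $A_2$ non-example correctly shows it is special to these chains.

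The genuine gap is that this local identity, which you yourself flag as the main obstacle, is never proven: both of your proposed routes are left as sketches, and neither closes as stated. Route (a), ``invoking the deformation invariance of $K^2$ directly,'' does not parse in the local setting, because $K^2$ is not defined for a germ or for the non-compact Milnor fiber; to make it rigorous you must either quote a local result (e.g.\ Wahl's formula for smoothings of rational surface singularities) or globalize: embed the singularity in a projective surface admitting a global $\QQ$-Gorenstein smoothing (such surfaces exist, but this requires an argument), apply invariance of $K^2$ in the proper $\QQ$-Gorenstein family, and rerun your part (2) computation on the nearby fiber. Route (b), ``induction on the Hirzebruch--Jung continued fraction,'' is not well-defined as stated: truncations of the chain of $\tfrac1{n^2}(1,nm-1)$ are in general no longer chains of this type, so there is no obvious induction parameter. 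What makes an induction work is the recursive description in \cite{ksb}: every such chain is obtained from $[4]$ by iterating the two operations $[e_1,\dots,e_k]\mapsto[e_1+1,e_2,\dots,e_k,2]$ and $[e_1,\dots,e_k]\mapsto[2,e_1,\dots,e_{k-1},e_k+1]$, and one must then verify that each operation changes the defect by exactly $-1$. With (a) made precise or (b) completed via that recursion, your argument becomes a correct alternative to the paper's; as written, the crucial singularity-specific input is asserted rather than established, whereas the paper's corresponding input (the eta-invariant value) is a quotable computation from \cite{park}.
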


\begin{proof}

(1) The square of the first Chern class can be computed in terms of the orbifold Euler characteristic and the orbifold signature:
\begin{equation*}
c_1^2(\wh M)=2\chi^{orb}(\wh M)+3\tau^{orb}(\wh M).
\end{equation*}

Moreover, the orbifold Euler characteristic and signature are related to the topological ones as follows:
\begin{equation*}
\chi^{orb}(\wh M)=\chi^{top}(\wh M)-\sum_{i=1}^{s}\frac{|\Gamma_i|-1}{|\Gamma_i|}
\end{equation*}
and
\begin{equation*}
\tau^{orb}(\wh M)=\tau^{top}(\wh M)+\sum_{i=1}^{s}\eta (S^3/\Gamma_i)
\end{equation*}
where $\Gamma_i$ is the local group associated to the singularity, and
 $\eta(S^3/\Gamma_i)$ is the eta-invariant of the link of the quotient singularity. 
In our case $|\Gamma_i|$ is $n_i^2,$ and the eta-invariant of the singularity is computed by Park \cite[Lemma 3.2]{park} to be:
\begin{equation*}
\eta(S^3/\frac1{n^2}(1,nm-1))=\frac23(1-\frac1{n^2})
\end{equation*}
Hence: 
\begin{equation*}
c_1^2(\wh M)=2\chi^{orb}(\wh M)+3\tau^{orb}(\wh M)=2\chi^{top}(\wh M)+3\tau^{top}(\wh M)
\end{equation*}
As $N$ is obtained from $\wh M$ by removing the singular points and gluing in rational homology spheres,
the Mayers-Vietoris sequence and Novikov's Additivity Theorem imply that 
\begin{equation*}
\chi^{top}(\wh M)=\chi(N), \tau^{top}(\wh M)=\tau(N).
\end{equation*}
We have immediately:
\begin{equation*}
2\chi^{top}(\wh M)+3\tau^{top}(\wh M)=2\chi(N)+3\tau(N)=c_1^2(N).
\end{equation*}

(2) In the case when $m_i=1,$ we only consider quotients of the form $\frac1{n_i^2}(1,n_i-1)$ for which 
the minimal resolution is a chain of  $n_i-1$ spheres of self-intersection  $(-(n_i+2),-2,\cdots,-2).$ 
Hence, the rational blow-down of such a configuration  increases $c^2_1=2\chi+3\tau$ by $n_i-1$ 
and decreases $c_2=\chi$ by $n_i-1,$ for
each configuration.
\end{proof}

Recall that a symplectic $4-$manifold is called minimal if it does not contain symplectically embedded  $2-$spheres 
of self-intersection $(-1).$ 
\begin{defn}[\cite{no-ein,tjl}]
A minimal symplectic $4-$manifold $(M,\omega)$ is called of general type if it satisfies:
\begin{equation}
c_1^2(M)>0 {\text ~and ~}
c_1(M)\cdot [\omega]<0,
\end{equation}
where $c_1(M)$ is the first Chern class of an almost complex structure compatible with $\omega.$
Moreover, a symplectic $4-$manifold is called of general type if it is the iterated blow-up of a minimal symplectic manifold of general type.
\end{defn}

This definition extends the classical one from the complex surface case. 
The first condition in the definition is  topological. 
The second condition only depends on the underlying smooth structure and
 it is independent of the choice of the symplectic structure \cite[Theorem 2.4]{tjl}.

\begin{prop}
\label{prop2}
 With the above notations, if $(\wh M, \omega_{\wh M})$ is an orbifold K\"ahler surface such that 
 $c_1^2(\wh M)>0$ and $c_1(\wh M)\cdot[ \omega_{\wh M}]<0$  then $N$ is a symplectic manifold of general type.
\end{prop}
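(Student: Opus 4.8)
The plan is to verify, for the symplectic manifold $N$, the two numerical conditions in the definition of symplectic general type, transporting them from the given orbifold inequalities on $\wh M$. By the construction in Section \ref{constr}, the chains $C_i$ consist of complex and hence symplectic curves, so Symington's theorem endows $N$ with a symplectic form $\omega_N$; I would further choose $\omega_N$ and a compatible almost complex structure $J_N$ so that on the common region $W:=N\setminus\bigcup_i B_{n_i,m_i}=\wh M\setminus\{p_i\}$ they restrict to the orbifold K\"ahler form $\omega_{\wh M}$ and to the complex structure inherited from $M$. The first condition is then immediate from Proposition \ref{prop1}(1): $c_1^2(N)=c_1^2(\wh M)>0$.

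For the second condition I would exploit that passing from $\wh M$ to $N$ only replaces the cones on the lens spaces $L(n_i^2,n_im_i-1)$ by the rational homology balls $B_{n_i,m_i}$ bounding them. Since lens spaces are rational homology spheres and the $B_{n_i,m_i}$ are rational homology balls, a Mayer--Vietoris computation shows that the restriction maps give isomorphisms
\begin{equation}
H^2(N;\QQ)\cong H^2(W;\QQ)\cong H^2(\wh M;\QQ),
\end{equation}
and that this identification is an isometry of the rational intersection forms, the glued-in balls contributing nothing (Novikov additivity). Under it, $[\omega_N]$ corresponds to $[\omega_{\wh M}]$ because the two forms agree on $W$, while $c_1(N)$ corresponds to $c_1(\wh M)$: both restrict on $W$ to $c_1(TW,J)$, and $c_1(N)$ is determined by this restriction since $K_{B_{n,m}}^{\,n}$ is trivial, so $c_1$ is rationally trivial on each ball. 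Being the pairing of corresponding classes under an isometry,
\begin{equation}
c_1(N)\cdot[\omega_N]=c_1(\wh M)\cdot[\omega_{\wh M}]<0,
\end{equation}
which is the second condition. One may instead invoke \cite[Theorem 2.4]{tjl}, by which the sign of $c_1\cdot[\omega]$ is independent of the symplectic form, so that producing the single form $\omega_N$ above suffices.

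It remains to deduce that $N$ is of general type. If $N$ is minimal this is the definition. In general I would argue through the symplectic Kodaira dimension $\kappa$, a smooth invariant: the cases $\kappa(N)\in\{0,1\}$ force a minimal model to have vanishing $c_1^2$, whence $c_1^2(N)\le0$, contradicting $c_1^2(N)>0$; and $\kappa(N)=-\infty$, i.e. $N$ rational or ruled, is incompatible with having simultaneously $c_1^2(N)>0$ and $c_1(N)\cdot[\omega_N]<0$ (for a blow-up of $\bcp^2$ or of a rational ruled surface with $c_1^2>0$ the inequality $\omega_N^2>0$ forces $c_1(N)\cdot[\omega_N]>0$, while irrational ruled surfaces already have $c_1^2\le0$). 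Hence $\kappa(N)=2$ and $N$ is of general type. I expect the delicate point to be the second paragraph: one must choose $J_N$ and $\omega_N$ compatibly with the gluing so that the restriction-to-$W$ isomorphism genuinely carries $c_1(N)$ and $[\omega_N]$ to $c_1(\wh M)$ and $[\omega_{\wh M}]$ and respects the intersection pairing. The triviality of $K_{B_{n,m}}^{\,n}$ is exactly what makes the Chern-class bookkeeping go through, and the careful handling of the boundary lens spaces in Mayer--Vietoris is where the work lies.
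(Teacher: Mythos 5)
Your proposal is correct and follows essentially the same route as the paper: both verify $c_1^2(N)=c_1^2(\wh M)>0$ via Proposition \ref{prop1}, then prove $c_1(N)\cdot[\omega_N]<0$ by splitting degree-two rational cohomology across the lens spaces (rational homology spheres), discarding the $B_{n_i,m_i}$ contribution because $K_{B_{n,m}}^n$ is trivial, and identifying the remaining data with that of $\wh M$ on the common piece --- the paper performing this last identification via Symington's symplectomorphism $\psi$ on $M_0$ rather than by arranging the forms to agree on the nose. Your closing paragraph on the symplectic Kodaira dimension makes explicit a point the paper handles implicitly by citing \cite[Theorem 2.4]{tjl} after the definition of general type (with minimality treated separately in Proposition \ref{prop22}); it is a sound addition, not a different method.
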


\begin{proof}
As $c_1^2(\wh M)>0,$ then  $c_1^2(N)=c_1^2(\wh M)>0,$ and we only need to show
 that $c_1(N)\cdot [\omega_N]<0,$ or equivalently $c_1(K_N)\cdot[\omega_N]>0,$ for some choice of $\omega_N.$

In order to simplify the notations, we show that $c_1(N)\cdot[\omega_N]<0$ in the case when $M$ contains 
only one configuration of curves of type $C_{n,m}.$ 
The computations extend trivially to $s$ configurations of this type. 

Let  $M_0$ be the open manifold such that:
\begin {eqnarray}
M&=&M_0\bigcup_{L(n^2,nm-1)} Nbd (C_{n,m}), \notag\\
\wh M&=&M_0\bigcup_{L(n^2,nm-1)} \CC^2/\frac1{n^2}(1,nm-1), \notag\\
 N&=&M_0\bigcup_{L(n^2,nm-1)} B_{n,m}.
\end{eqnarray}
On $M$ we consider the K\"ahler structure obtained from the pull-back of the K\"ahler structure on 
$\wh M$ and adding a sufficiently small multiple 
of a K\"ahler structure on the  $Nbd(C_{n,m})$ which is diffeomorphic to the minimal resolution of $\CC^2/\frac1{n^2}(1, nm-1).$
Let $c_1(K_N)$ and $\omega_N$ be the canonical class and the symplectic form on $N,$ constructed by Symington \cite{sym1, sym2}. 
Moreover, Symington shows that there is a symplectomorphism $\psi :(M_0, \omega_N|M_0)\to (M_0,\omega_{\wh M}|M_0).$ 
Since $H^1(L(n^2,nm-1),\QQ)=H^2(L(n^2,nm-1),\QQ)=0,$ we have that $c_1(K_N)$ and $\omega_N$ decompose as 
$$c_1(K_N)=c_1(K_N)|_{M_0}+c_1(K_N)|_{B_{n,m}} ~\text{and}~ [\omega_N]=[\omega_N|_{M_0}]+[\omega_N|_{B_{n,m}}]$$
and as $(K_M|_{B_{n,m}})^n$ is trivial, then:
\begin{eqnarray}
c_1(K_N)\cdot[\omega_N]&=&{c_1(K_N)|}_{M_0}\cdot[\omega_N|_{M_0}]=\psi_*(c_1(K)|_{M_0})\cdot\psi_*([\omega_N|_{M_0}])\notag\\
&=&c_1(K_{\wh M})\cdot[\omega_{\wh M}]>0.\notag
\end{eqnarray}
\end{proof}

The structure of $N$ is in fact intimately related to the properties of $\wh M,$ and not necessarily to those of $M.$

\begin{prop}
\label{prop22}
Consider the complex orbifold surface $\wh M'$ obtained from $\wh M$ by collapsing all the rational curves of self-intersection $(-2)$ which do not pass through the singular points of $\wh M.$ 
If the canonical sheaf of $\wh M'$ is ample  then $N$ is a minimal symplectic surface of general type.
\end{prop}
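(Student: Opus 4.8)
The plan is to reduce the statement to Proposition \ref{prop2}, whose hypotheses are $c_1^2(\wh M)>0$ and $c_1(\wh M)\cdot[\omega_{\wh M}]<0$ for some orbifold K\"ahler class $\omega_{\wh M}$. Proposition \ref{prop2} already shows that these two conditions on $\wh M$ force $N$ to be of general type, so the task is to extract them from the single hypothesis that $K_{\wh M'}$ is ample, where $\wh M'$ is obtained from $\wh M$ by contracting the $(-2)$-curves disjoint from the singular points. First I would observe that the contraction $\pi\colon\wh M\to\wh M'$ introduces only rational double points (ordinary $A_n$, $D_n$, $E_n$ singularities from the chains of $(-2)$-curves), so it is crepant: $K_{\wh M}=\pi^*K_{\wh M'}$. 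Consequently $c_1^2(\wh M)=c_1^2(\wh M')$, and ampleness of $K_{\wh M'}$ gives $K_{\wh M'}^2>0$, so $c_1^2(\wh M)>0$, which is the first required inequality.

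The second inequality requires producing an orbifold K\"ahler class on $\wh M$ pairing positively with $K_{\wh M}$. Here I would use ampleness of $K_{\wh M'}$ directly: since $K_{\wh M'}$ is ample, it is itself (a multiple of) an orbifold K\"ahler class $\omega'$ on $\wh M'$, and by Kleiman's criterion $K_{\wh M'}\cdot\omega'>0$. The natural candidate on $\wh M$ is the pullback $\omega_{\wh M}=\pi^*\omega'$. By the projection formula together with crepancy,
\begin{equation}
c_1(K_{\wh M})\cdot[\omega_{\wh M}]=\pi^*K_{\wh M'}\cdot\pi^*\omega'=K_{\wh M'}\cdot\omega'>0,
\end{equation}
which is exactly $c_1(\wh M)\cdot[\omega_{\wh M}]<0$. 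At this point both hypotheses of Proposition \ref{prop2} hold, and that proposition yields that $N$ is a symplectic manifold of general type.

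The remaining assertion is \emph{minimality} of $N$, which is the part Proposition \ref{prop2} does not address and which I expect to be the main obstacle. The difficulty is that minimality of a symplectic $4$-manifold is not a numerical condition: one must rule out symplectically embedded $(-1)$-spheres, and a $\QQ$-Gorenstein smoothing such as the rational blowdown can in principle create exceptional spheres that are invisible at the level of $c_1^2$. The strategy I would pursue is to invoke Seiberg-Witten theory: for a minimal symplectic $4$-manifold of general type the canonical class is a Seiberg-Witten basic class, and a symplectic $(-1)$-sphere would force a blow-up formula incompatible with the basic classes of $N$. Concretely, I would show that the pullback $\pi^*\omega'$ defines a symplectic form whose canonical class $c_1(K_N)$ satisfies $c_1(K_N)\cdot[\omega_N]>0$ on all of $N$, and then argue that the ampleness of $K_{\wh M'}$ (so that $\wh M'$ has no $(-1)$-curves and no further $(-2)$-curves to contract) means the contracted model is the canonical model; any symplectic $(-1)$-sphere in $N$ would have to come from $M_0$, but such a sphere would persist under the symplectomorphism $\psi\colon M_0\to M_0$ of Proposition \ref{prop2} into $\wh M'$ and contradict ampleness of $K_{\wh M'}$. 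The delicate point is controlling exceptional spheres that may intersect the glued-in rational homology balls $B_{n,m}$; for this I would use that $K_{B_{n,m}}^n$ is trivial, so no exceptional sphere can live in $B_{n,m}$, and combine this with the Seiberg-Witten minimality criterion to conclude.
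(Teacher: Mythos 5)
Your reduction of the general-type claim to Proposition \ref{prop2} follows the paper's strategy, but it has a gap at the key step: $\pi^*\omega'$ is \emph{not} an orbifold K\"ahler class on $\wh M$. The pullback of an ample class under the contraction $\pi\colon\wh M\to\wh M'$ is only semi-positive --- it pairs to zero with every contracted $(-2)$-curve --- so $(\wh M,\pi^*\omega')$ is not an orbifold K\"ahler surface, and Proposition \ref{prop2}, whose hypothesis is about an orbifold K\"ahler form on $\wh M$, does not apply to it as stated. The fix is exactly what the paper does: set $\omega=p^*(\omega_{g'})+\epsilon\omega_0$ with $\omega_0$ a genuine K\"ahler form on $\wh M$ and $\epsilon>0$ small, so that $\omega$ is K\"ahler and $c_1(\wh M)\cdot[\omega]=\frac{1}{2\pi}\lambda[\omega_{g'}]^2+\epsilon\,c_1(\wh M)\cdot[\omega_0]<0$ for $\epsilon$ sufficiently small. (The paper obtains the negative leading term from Kobayashi's orbifold K\"ahler--Einstein metric; your Kleiman-type pairing $K_{\wh M'}\cdot\omega'>0$ would serve equally well for that term, but the perturbation by $\epsilon\omega_0$ cannot be skipped.)

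The minimality part is where your proposal genuinely breaks down. The localization argument --- that a symplectic $(-1)$-sphere ``would have to come from $M_0$'' and would ``persist under the symplectomorphism $\psi$'' into $\wh M'$, contradicting ampleness --- is not sound: an exceptional sphere in $N$ need not lie in $M_0$ nor in any $B_{n_i,m_i}$; it can run through the gluing region, and nothing in your argument rules this out. (The only solid piece is that no homologically essential sphere lies entirely inside a rational homology ball, since $H_2(B_{n,m};\QQ)=0$; triviality of $K_{B_{n,m}}^n$ is not what is needed there.) Moreover, ampleness of $K_{\wh M'}$ is a statement in complex geometry, while $N$ is merely symplectic, so ``the contracted model is the canonical model'' does not transfer to symplectic minimality of $N$. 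The Seiberg--Witten route you gesture at is viable, but it requires Park's actual computation of the Seiberg--Witten invariants of generalized rational blowdowns (a unique basic class up to sign); this is precisely the proof the paper cites and then deliberately omits. Instead, the paper proves minimality by curvature estimates: Theorem \ref{metrics} produces metrics $g_t$ on $N$ with $\int_N s_{g_t}^2\,d\mu_{g_t}\to 32\pi^2c_1^2(N)$, whereas if $N=X\#l'\cpb$ with $l'>0$, LeBrun's bound (Theorem \ref{CL-bound}) would force $\int_N s_g^2\,d\mu_g\geq 32\pi^2\bigl(c_1^2(N)+l'\bigr)$ for every metric $g$ --- a contradiction. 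As written, your proposal contains no completed substitute for either mechanism, so the minimality claim remains unproven.
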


\begin{proof} Since $\wh M'$ has ample canonical sheaf, we have $c_1^2(\wh M')>0.$
Moreover, by the positive solution to the Calabi Conjecture for orbifolds, due to Kobayashi \cite{kob}, 
there exists a K\"ahler-Einstien metric $g'$ such that $ \rho_{g'}=\lambda\omega_{g'}$, 
where $\rho_{g'}$ is the Ricci form associated to $g'$ and $\lambda<0$ is the Einstein constant. 
Then  we have 
$$c_1(\wh M')[\omega_{g'}]=[\frac1{2\pi}\rho_{g'}][\omega_{g'}]=\frac1{2\pi}\lambda[\omega_{g'}]^2<0.$$
The projection $p:\wh M\to \wh M'$  collapses only self-intersection $(-2)$ rational curves, 
and hence $K_{\wh M}=p^*(K_{\wh M'}).$ 
This implies that $c_1(\wh M)^2=c_1(K_{\wh M})^2=c_1(K_{\wh M'})^2>0.$ 
We can also construct a K\"ahler form on $\wh M$ by taking the pull-back of the K\"ahler form $\omega_{g'}$ and adding a small multiple of a K\"ahler form $\omega_0$ on $\wh M:$
$$\omega:= p^*(\omega_{g'})+\epsilon \omega_0, \epsilon>0 ~\text{sufficiently small}.$$
Then 
$$c_1(\wh M)[\omega]=[\frac1{2\pi}p^*(\rho_{g'})][p^*(\omega_{g'})+\epsilon \omega_0]=\frac1{2\pi} \lambda[\omega_{g'}]^2+\epsilon c_1(\wh M)\omega_0<0$$
for $\epsilon$ sufficiently small.
Then, by Proposition \ref{prop2}, the manifold $N$ is of general type.

One method to show that $N$ is a minimal surface is by using Seiberg-Witten theory 
to show that $N$ has a unique basic class up to sign, see \cite{park}. 
We  omit this proof here, and instead we give a different proof in Sect. \ref{bound}, using curvature estimates.

\end{proof}

\section{The Yamabe invariant and curvature estimates}
\label{bound}

Let $M$ be a closed $4-$manifold.
We first define the Yamabe constant, $Y(M, [g]),$ as an invariant of the conformal class of a Riemannian metric $g$ on $M:$ 

$$Y(M,[g])=\inf_{\wt g\in[g]}\frac{\int_M s_{\wt g} d\mu_{\wt g}}{{Vol(\wt g)}^{\frac12}},$$
where $[g]=\{\wt g=e^fg| f:M\to\RR \text{ smooth}\}.$

The Yamabe invariant is a diffeomorphism invariant of the manifold and is defined  \cite{oKob,schoen} as:

\begin{equation}
\label{yam}
Y(M)=\sup_{[g]} Y(M,[g]).
\end{equation}

We use  an alternative description in the non-positive case 
which  helps us to compute the invariant. 

\begin{prop}
\label{comp-yam} Let $M$ be a smooth compact $4-$manifold.
\begin{itemize}
\item[(1)]   \cite{bcg} Let $[g]$ be a conformal class with negative
Yamabe constant. Then
\begin{equation}
\inf_{\wt g\in[g]}\int_M s_{\wt g}^2 ~ d \mu_{\wt g}=|Y(M,[g])|^2,
\end{equation}
and this infimum is precisely achieved by the metrics of constant scalar curvature.
\item[(2)] \cite{oKob,schoen}
If  $Y(M)\leq0$ then
\begin{equation}
\inf_{g}\int_M s_g^2 ~ d \mu_g=|Y(M)|^2,
\end{equation}
where the infimum is taken over all smooth Riemanian metrics $g$ on $M.$ 
\end{itemize}
\end{prop}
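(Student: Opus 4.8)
The statement reduces entirely to the solution of the Yamabe problem together with one elementary integral inequality, so the plan is to prove part~(1) directly and then deduce part~(2) by minimizing over conformal classes. Throughout I exploit that in dimension four the functional $\wt g\mapsto\int_M s_{\wt g}^2\,d\mu_{\wt g}$ is invariant under constant rescalings of the metric, which lets me normalize volumes freely without affecting the quantities in question.

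For part~(1), fix a conformal class $[g]$ with $Y(M,[g])<0$. The negative case of the Yamabe problem (which I invoke as an external black box, due to Trudinger and Aubin) provides a unit-volume metric $g_0\in[g]$ of constant scalar curvature $s_{g_0}\equiv Y:=Y(M,[g])<0$. This already yields the upper bound $\inf_{\wt g\in[g]}\int_M s_{\wt g}^2\,d\mu_{\wt g}\le\int_M s_{g_0}^2\,d\mu_{g_0}=Y^2$. The substance of part~(1), and what I regard as the main obstacle, is the matching lower bound: a naive application of Cauchy--Schwarz to $\int_M s_{\wt g}\,d\mu_{\wt g}$ produces an estimate in the \emph{wrong} direction, so one must argue more carefully. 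I write an arbitrary competitor as $\wt g=u^2g_0$ with $u>0$ smooth and use the conformal transformation law $s_{\wt g}=u^{-3}(6\Delta_{g_0}u+s_{g_0}u)$, where $\Delta_{g_0}=\nabla^*\nabla$ is the nonnegative Laplacian. Since $d\mu_{\wt g}=u^4\,d\mu_{g_0}$ and $s_{g_0}\equiv Y$, expanding the square gives
\begin{equation*}
\int_M s_{\wt g}^2\,d\mu_{\wt g}=\int_M\frac{(6\Delta_{g_0}u+Yu)^2}{u^2}\,d\mu_{g_0}=\int_M\Big(36\frac{(\Delta_{g_0}u)^2}{u^2}+12Y\frac{\Delta_{g_0}u}{u}+Y^2\Big)\,d\mu_{g_0}.
\end{equation*}
Integrating the middle term by parts yields $\int_M\frac{\Delta_{g_0}u}{u}\,d\mu_{g_0}=-\int_M\frac{|\nabla u|^2}{u^2}\,d\mu_{g_0}\le0$, and because $Y<0$ the coefficient $12Y$ converts this into a nonnegative contribution. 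The first term is manifestly nonnegative and the last integrates to $Y^2$ (unit volume), so $\int_M s_{\wt g}^2\,d\mu_{\wt g}\ge Y^2$. This is precisely the step where the negative sign of the scalar curvature is used decisively. Equality forces $\int_M u^{-2}|\nabla u|^2\,d\mu_{g_0}=0$, hence $u$ is constant and $s_{\wt g}$ is constant, which identifies the minimizers with the constant-scalar-curvature metrics and completes~(1).

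Finally, part~(2) follows by writing $\inf_g\int_M s_g^2\,d\mu_g=\inf_{[g]}\big(\inf_{\wt g\in[g]}\int_M s_{\wt g}^2\,d\mu_{\wt g}\big)$ and applying part~(1) class by class. If $Y(M)<0$, every class satisfies $Y(M,[g])\le Y(M)<0$, so the inner infima equal $Y(M,[g])^2$ and $\inf_g\int_M s_g^2\,d\mu_g=\inf_{[g]}Y(M,[g])^2$; among negative numbers the square is smallest at the value closest to zero, namely $\sup_{[g]}Y(M,[g])=Y(M)$, so this equals $Y(M)^2=|Y(M)|^2$. If $Y(M)=0$, each class with $Y(M,[g])<0$ contributes the positive number $Y(M,[g])^2$, while any class with $Y(M,[g])=0$ contains a scalar-flat Yamabe metric and hence has inner infimum $0$; taking the infimum over all classes again gives $0=|Y(M)|^2$. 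The only external inputs are the existence half of the Yamabe problem in the nonpositive case and the conformal transformation law for scalar curvature.
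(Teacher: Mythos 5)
Your proposal is correct, and it is worth noting that the paper itself offers no proof of this proposition at all: it is quoted as a known result, with part (1) attributed to \cite{bcg} and part (2) to \cite{oKob,schoen}. So there is no internal argument to compare against; what you have written is essentially the standard proof from that literature, made self-contained. Your mechanism for part (1) is the right one: solve the Yamabe problem in the negative case (Trudinger--Aubin) to obtain a unit-volume metric $g_0\in[g]$ with $s_{g_0}\equiv Y:=Y(M,[g])<0$, write any competitor as $u^2g_0$, expand $\int s^2\,d\mu$ using the conformal transformation law in dimension four, and integrate by parts so that the cross term $12Y\int\frac{\Delta u}{u}\,d\mu_{g_0}=-12Y\int\frac{|\nabla u|^2}{u^2}\,d\mu_{g_0}\geq 0$ precisely because $Y<0$. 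This correctly isolates where negativity enters, and your observation that a naive Cauchy--Schwarz estimate runs in the wrong direction is accurate. Part (2), obtained by taking the infimum of part (1) over conformal classes and using that $x\mapsto x^2$ is decreasing on the negative reals, is also correct, including the separate treatment of the borderline case $Y(M)=0$ via scalar-flat Yamabe metrics or classes with constant approaching zero.

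One point you should make explicit in the equality discussion of part (1): your computation proves that every minimizer is a constant multiple of $g_0$, hence has constant scalar curvature, but the statement claims the infimum is achieved \emph{precisely} by the constant-scalar-curvature metrics, which also requires the converse inclusion. That direction follows from the uniqueness, up to homothety, of constant-scalar-curvature metrics in a conformal class of negative Yamabe constant (a standard maximum-principle/first-eigenvalue fact), combined with the scale invariance of $\int s^2\,d\mu$ in dimension four that you noted at the outset. As written, the identification of minimizers is only proved in one direction; the missing half is a one-line standard fact, but it should be stated.
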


Using this description, LeBrun was able to compute the Yamabe invariant in the case of K\"ahler surfaces 
of non-negative Kodaira dimension. 
He shows that:

\begin{thm}[\cite{no-ein}, \cite{kod-dim}]
\label{Yam-Kah}
Let $M$ be obtained from a minimal K\"ahler surface $X$ of  Kodaira dimension $0, 1$ or $2$ by blowing up $l\geq 0$ points. 
Then the Yamabe invariant  $Y(M)$ is independent of $l,$ and
\begin{equation}
Y(M)=-4\pi\sqrt{2c_1^2(X)}.\notag
\end{equation}
Moreover, this invariant is not attained if $l>0.$
\end{thm}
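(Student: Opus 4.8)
The plan is to prove the two opposite inequalities
$$Y(M)\leq -4\pi\sqrt{2c_1^2(X)}\qquad\text{and}\qquad Y(M)\geq -4\pi\sqrt{2c_1^2(X)}$$
separately, using Proposition \ref{comp-yam}(2) to convert everything into the study of the scalar curvature functional $\int_M s_g^2\,d\mu_g$. First I would check that $Y(M)\leq0$: since $X$ has non-negative Kodaira dimension and $b_+>1$, the Kähler surface $M=X\#l\cpb$ has nonzero Seiberg--Witten invariants, so it admits no metric of positive scalar curvature, and hence $Y(M)\leq0$. Proposition \ref{comp-yam}(2) then gives $|Y(M)|^2=\inf_g\int_M s_g^2\,d\mu_g$, so the whole theorem reduces to showing that this infimum equals $32\pi^2 c_1^2(X)$, independently of $l$.

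For the lower bound on the functional I would invoke the Seiberg--Witten curvature estimate that underlies (\ref{Y-bound}): for any monopole class $\mathfrak a$ and any metric $g$ one has $\int_M s_g^2\,d\mu_g\geq 32\pi^2(\mathfrak a^+)^2\geq 32\pi^2\,\mathfrak a^2$, where $\mathfrak a^+$ is the $g$-self-dual part and the last inequality uses $(\mathfrak a^-)^2\leq0$. Applying this naively to $\mathfrak a=c_1(K_M)$ only yields the weak constant $c_1^2(M)=c_1^2(X)-l$, so the essential point is to upgrade it to the blow-up-invariant constant $c_1^2(X)$. Here I would use that, by the blow-up formula, the full family $K_X\pm E_1\pm\cdots\pm E_l$ consists of monopole classes, and apply LeBrun's refined estimate (the convex-combination/averaging argument over this family) to obtain $\inf_g\int_M s_g^2\,d\mu_g\geq 32\pi^2 c_1^2(X)$, and hence $Y(M)\leq -4\pi\sqrt{2c_1^2(X)}$ for every $l\geq0$.

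For the matching upper bound I would construct an explicit minimizing sequence. When $l=0$, the canonical model of $X$ carries an orbifold Kähler--Einstein metric by the Aubin--Yau theorem; this constant-scalar-curvature Kähler metric realizes $\int s_g^2\,d\mu_g=32\pi^2 c_1^2(X)$ exactly (desingularizing if the canonical model has rational double points). When $l>0$, I would graft a Burns-type scalar-flat Kähler ALE metric into a small neighborhood of each of the $l$ blown-up points; since these pieces are scalar-flat, their contribution to $\int s_g^2\,d\mu_g$ tends to zero as the gluing necks shrink, producing a sequence of metrics on $M$ with $\int s_g^2\,d\mu_g\to 32\pi^2 c_1^2(X)$. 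Combined with the lower bound this forces equality, so $Y(M)=-4\pi\sqrt{2c_1^2(X)}$ for all $l$.

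Finally, for non-attainment when $l>0$, I would argue by the equality case: a metric realizing the infimum would, by Proposition \ref{comp-yam}, have constant scalar curvature and would saturate the Seiberg--Witten estimate, which forces it to be Kähler--Einstein with the monopole class a multiple of $[\omega]$; but the exceptional $(-1)$-curves $E_i$ are incompatible with such a rigid configuration on the blown-up manifold, a contradiction. I expect the \emph{main obstacle} to be precisely the sharp lower bound: replacing the naive per-class constant $c_1^2(M)$ by the blow-up-invariant $c_1^2(X)$ is the technical heart, since it requires exploiting the entire family of monopole classes rather than a single one, and dovetailing this with the fact that the grafted scalar-flat pieces make the exceptional contributions disappear in the limit.
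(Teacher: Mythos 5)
Note first that the paper contains no proof of Theorem \ref{Yam-Kah}: it is quoted from LeBrun \cite{no-ein,kod-dim}, so your proposal must be measured against LeBrun's argument, of which it is essentially a reconstruction. For Kodaira dimension $2$ your outline is the correct one: the Seiberg--Witten lower bound $\int_M s^2\,d\mu\geq 32\pi^2c_1^2(X)$ obtained from the family of monopole classes $K_X\pm E_1\pm\cdots\pm E_l$ (in fact a single good choice of signs suffices, since self-dual classes have non-negative square and $[c_1^+(X)]^2\geq c_1^2(X)$, so no averaging is needed); the minimizing sequence obtained by gluing ALE pieces onto the orbifold K\"ahler--Einstein metric on the canonical model (hyperk\"ahler instantons at the rational double points, Burns metrics at the blown-up points); and non-attainment for $l>0$ via the equality case of the curvature estimate. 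This is also the template that the present paper adapts in the proof of Theorem \ref{metrics}.

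The genuine gap is that your minimizing-sequence construction covers only Kodaira dimensions $2$ and $0$. In dimension $0$ the minimal model carries a Ricci-flat K\"ahler metric, so grafting scalar-flat Burns metrics still drives $\int s^2\to0$; but in Kodaira dimension $1$ a minimal properly elliptic surface admits no K\"ahler--Einstein metric at all (this would force either $c_1=0$ in real cohomology or $c_1^2>0$, and both fail when the Kodaira dimension is $1$), and there is no canonical model with ample canonical sheaf to which the Aubin--Yau theorem applies. Since $c_1^2(X)=0$ in that case, the theorem asserts $Y(M)=0$, and one must exhibit metrics on $M=X\#l\cpb$ with $\int s^2\to0$; LeBrun does this by an entirely different mechanism, a collapsing construction adapted to the elliptic fibration, which your proposal does not address. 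As written, your proof establishes the theorem only in the general-type (and Kodaira dimension $0$) cases. A secondary inaccuracy: you assume $b_+>1$ to conclude nonvanishing of the Seiberg--Witten invariants, but the theorem includes surfaces with $b_+=1$ (Enriques surfaces, Dolgachev surfaces, general type with $p_g=0$); there one needs the chamber-dependent, perturbed Seiberg--Witten argument -- this is precisely why Theorem \ref{CL-bound} carries the extra hypothesis $c_1(X)\cdot[\omega]<0$ when $b^+=1$.
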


\subsection{The orbifold Yamabe invariant}
Orbifold spaces  appear naturally in the study of the moduli space of metrics on smooth manifolds \cite{TiVi, And}. 
We  only consider spaces with isolated singular points, where there exists a neighborhood $(U,p)$ 
of the singular point $p$ modeled by 
$$\psi: (U,p)\to (B_{r_0}(0)/\Gamma_p,0),$$
where $B_{r_0}$ is a ball of radius $r_0$ in $\RR^4$ and $\Gamma_p$ is a finite subgroup of $O(4)$ 
acting freely on $\RR^4\setminus\{0\}.$ 
An orbifold metric $g$ on $M$ is a Riemannian metric on the smooth part of $M$ and such that the pull-back of 
$\psi_*(g)$ on $B_{r_0}(0)\setminus\{0\}$ extends to a smooth Riemannian metric on $B_{r_0}(0).$ 
We denote by $\cal{M}^{orb}(M)$ the space of orbifold metrics on $M$, and by $[g]_{orb}=[g]\cap \cal{M}^{orb}(M)$ 
the orbifold conformal class of $g.$ 
The orbifold Yamabe constant of a conformal class and the orbifold Yamabe invariant of $M$ can be defined 
in an analogous way to the smooth case (see  \cite{AkBo} for detailed definitions and properties).

\medskip
Let $B_{n,m}$ be one of the rational homology balls discussed in Sect \ref{constr}.
In \cite[Proposition 3.3]{ricflat}, the author shows that $B_{n,m}$ admits an ALE Ricci flat K\"ahler metric $g_{n,m}$. 
The ALE requirement  means that there exists a diffeomorphism, 
$$\phi:U_{\infty}\to(\CC^2\setminus B_T(0))/\frac{1}{n^2}(1,nm-1), T\gg0$$
 between a neighborhood of infinity in $B_{n,m}$ and a neighborhood of infinity in 
 $\CC^2_{(z_1,z_2)}/\frac{1}{n^2}(1,nm-1),$ 
 such that $\phi_*(g_{n,m})=g_{Eucl}+O(R^{-4}),$
 where $R$ denotes the Euclidean radius.
 Then if we consider the change of coordinates $x_i=z_i/R^2, i=1,2,$  we see that the rational homology ball $(B_{n,m},g_{n,m})$ 
 admits a  $C^{1,\alpha}$ conformal compactification 
 with one orbifold point of type $\CC^2/\frac1{n^2}(1,nm-1).$ 
 We denote by $\wh B_{n,m}$ the compactification.

Next, we prove that the results in the hyperk\"ahler $4-$case \cite[Theorem 1.3]{Via} extend to the orbifold compactification of 
 the rational homology ball $B_{n,m}.$

\begin{proof}[Proof of Proposition \ref{yam-rat-ball}]
Notice that the existence of a $C^\infty$-orbifold conformal compactification $(\wh B_{n,m},[\wh g_{n,m}])$  follows 
 directly from \cite[Proposition 12]{clw}. 
 Moreover, the arguments in the proof of \cite[Proposition 13]{clw} tell us that we can choose a metric 
 $g_\infty$ in the conformal class of $\wh g_{n,m}$ which has strictly positive scalar curvature everywhere.  
  We can now conclude that  $Y(\wh B_{n,m},[\wh g_{n,m}])>0.$
 We remark that in order to apply Propositions 12 and 13 from \cite{clw} we only need  ALE scalar flat anti-self-dual metrics, 
  which is a condition much weaker than the ALE Ricci-flat K\"ahler condition.

 On the other hand, \cite[Theorem B]{AkBo} tells us that the orbifold Yamabe invariant is bounded from above in terms of the
 Yamabe invariant of the $4-$sphere and the order of the group of the singularity, $|\Gamma|=n^2$:
 $$Y^{orb}(\wh B_{n,m}) \leq \frac{Y(S^4)}{\sqrt{|\Gamma|}}=\frac{8\sqrt6\pi}{n}.$$
Moreover, Akutagawa \cite[Theorem 3.1]{aku} shows that if the bound is strict than there exists a 
metric $\wt g\in[\wh g_{n,m}]$ which has constant scalar curvature. 
On the other hand $B_{n,m}$ has fundamental group $\ZZ/n\ZZ,$ and its universal covering is the manifold $A_{n-1},$ 
diffeomorphic to the minimal resolution of $\CC^2/\frac1n(1, n-1).$ 
Given the covering map $c:A_{n-1}\to B_{n,m},$ then $(A_{n-1}, c^*(g_{n,m})$ 
is a hyperk\"ahler $4-$manifold.
Hence, the existence of a metric of constant scalar curvature $\wt g\in[\wh g_{n,m}]$ implies the existence of a 
constant scalar curvature metric in the orbifold conformal compactification class of $c^*(g_{n,m}).$ 
But Viaclovsky shows that this is impossible \cite[Section 4.1]{Via}. 
Hence the orbifold Yamabe invariant is the maximal one, and there is no metric of constant scalar curvature in the given conformal class.
\end{proof}

\subsection{Curvature estimates}
There are well-know relations between the $L^2-$norms of the curvature components and 
the topological invariants of $4-$manifolds given by the Gauss-Bonnet theorem, or the Hirzebruch Signature theorem. 
As we consider manifolds with special geometries, K\"ahler or symplectic manifolds, 
 there are additional curvature estimates which depend on the differentiable structure (see for example \cite{no-ein}). 
In this paper we use the following key scalar curvature bound, which relies on the existence of a nontrivial Seiberg-Witten invariant:

\begin{thm}[{\cite[Theorem 4]{no-ein}}]
\label{CL-bound}
Let $(X,\omega)$ be a symplectic manifold, and let $M=X\#l\cpb.$ If $b^+=1,$ assume that 
$c_1(X)\cdot[\omega]<0.$
Then any Riemannian metric on $M$ satisfies
\begin{equation*}
\int_Ms^2d\mu\geq32\pi^2 c_1^2(X).
\end{equation*}
\end{thm}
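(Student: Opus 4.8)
The plan is to derive the estimate from the Seiberg--Witten monopole equations, via the standard scalar curvature bound attached to a monopole class. Recall that a class $\mathfrak{a}\in H^2(M,\ZZ)$ is a \emph{monopole class} if it is the first Chern class of a $\mathrm{spin}^c$ structure admitting an irreducible solution of the (perturbed) Seiberg--Witten equations for every metric. The Weitzenb\"ock formula for the Dirac operator, combined with an a priori estimate bounding the $L^4$-norm of the spinor by the $L^2$-norm of the scalar curvature, together with the fact that the harmonic representative minimizes the $L^2$-norm within a cohomology class, yields the basic inequality
\begin{equation*}
\int_M s_g^2\,d\mu_g\ge 32\pi^2\,(\mathfrak{a}^+_g)^2
\end{equation*}
for any monopole class $\mathfrak{a}$ and any metric $g$, where $\mathfrak{a}^+_g$ is the self-dual part of the $g$-harmonic representative of $\mathfrak{a}$. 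Two inputs then furnish the monopole classes we need: Taubes' theorem, which shows that on the symplectic manifold $(X,\omega)$ the canonical class $c_1(K_X)=-c_1(X)$ is a monopole class, and the blow-up formula, which promotes it to the $2^l$ classes $c_1(K_X)\pm E_1\pm\cdots\pm E_l$ on $M=X\#l\cpb$, where $E_1,\dots,E_l$ are the exceptional classes and $b^+(M)=b^+(X)$.

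Next I would record the elementary facts $(\mathfrak{a}^+_g)^2\ge\mathfrak{a}^2$ (the anti-self-dual part of a harmonic form has non-positive square) and $c_1(K_X)^2=c_1^2(X)$. The naive choice $\mathfrak{a}=c_1(K_X)+E_1+\cdots+E_l$ is too crude, since $\mathfrak{a}^2=c_1^2(X)-l$; the point is to absorb the exceptional contributions by averaging. Fix a metric $g$ and set $u=c_1(K_X)^+_g$ and $v_i=(E_i)^+_g$. Averaging $\big(u+\sum_i\epsilon_i v_i\big)^2$ over all sign vectors $\epsilon\in\{\pm1\}^l$ annihilates every linear and every cross term, leaving $u^2+\sum_i v_i^2\ge u^2=(c_1(K_X)^+_g)^2\ge c_1^2(X)$. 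Since the maximum of a finite set is at least its average, some sign choice gives a monopole class $\mathfrak{a}_\epsilon=c_1(K_X)+\sum_i\epsilon_i E_i$ with $((\mathfrak{a}_\epsilon)^+_g)^2\ge c_1^2(X)$; applying the basic inequality to $\mathfrak{a}_\epsilon$ yields $\int_M s_g^2\,d\mu_g\ge 32\pi^2 c_1^2(X)$. As $g$ was arbitrary, the theorem follows.

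The averaging step is routine linear algebra; the real content, and the main obstacle, is the Seiberg--Witten input. Establishing the basic inequality requires the full analytic package (existence and compactness for the monopole equations, the Weitzenb\"ock estimate, and the reduction to harmonic representatives), and the subtler issue is the hypothesis $b^+=1$. In that case the Seiberg--Witten invariant is chamber-dependent and can jump across walls, so the unconditional existence of solutions for every metric is not automatic. This is precisely where the assumption $c_1(X)\cdot[\omega]<0$ enters: it pins down the chamber determined by the symplectic form and guarantees, through Taubes' $b^+=1$ nonvanishing result, that $c_1(K_X)$, and hence each $\mathfrak{a}_\epsilon$, remains a monopole class in the relevant chamber for all metrics. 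Checking that this chamber condition is compatible with the blow-up formula, so that the perturbed solutions persist on $M$, is the step I would treat most carefully.
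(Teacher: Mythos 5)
First, a point of reference: the paper does not prove this theorem at all --- it is imported verbatim from LeBrun \cite[Theorem 4]{no-ein} --- so your attempt has to be measured against LeBrun's original argument. Your outline is essentially that argument: the Weitzenb\"ock-based bound $\int_M s_g^2\,d\mu_g\ge 32\pi^2((\mathfrak{a}^+_g))^2$ for any monopole class $\mathfrak{a}$ and any metric $g$, Taubes' nonvanishing theorem for the canonical class, the blow-up formula producing the classes $c_1(K_X)\pm E_1\pm\cdots\pm E_l$, and a choice of signs. Your averaging over $\{\pm1\}^l$ is a harmless variant of LeBrun's pointwise selection (choose $\epsilon_i$ so that $\epsilon_i E_i^+\cdot K^+\ge0$ and use that $(\sum_i\epsilon_iE_i^+)^2\ge0$, being the square of a self-dual harmonic form); both produce some $\epsilon$ with $((\mathfrak{a}_\epsilon)^+_g)^2\ge c_1^2(X)$. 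For $b^+\ge2$ this is a complete proof, modulo the standard Seiberg--Witten analytic package.

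The genuine gap is the $b^+=1$ case, which is exactly the case the hypothesis $c_1(X)\cdot[\omega]<0$ exists to handle, so it cannot be deferred; you correctly name the issue (chamber dependence) but do not close it, and in fact your averaging step is in tension with it. Averaging produces \emph{some} sign vector $\epsilon$ maximizing the self-dual projection, but when $b^+=1$ that particular $\mathfrak{a}_\epsilon$ need not be a monopole class for the given metric $g$: the classes $\mathfrak{a}_\epsilon$ have square $c_1^2(X)-l$, possibly negative, so walls $\mathfrak{a}_\epsilon\cdot[\omega_g]=0$ genuinely occur as $g$ varies, and the invariant may vanish in the chamber containing $(g,0)$. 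The missing argument is light-cone geometry. One may assume $c_1^2(X)>0$ (otherwise the inequality is trivial), and orient the positive cone of $H^2$ by declaring $[\omega]$ forward. Since $c_1(X)$ has positive square and $c_1(X)\cdot[\omega]<0$, it lies in the backward component of the cone; hence $c_1(X)\cdot[\omega_g]<0$ for the forward-oriented self-dual harmonic form $\omega_g$ of \emph{every} metric, both on $X$ and on $M$ (where the pullback of $c_1(X)$ still has positive square). So there are no walls for $c_1(X)$, every unperturbed pair $(g,0)$ lies in the Taubes chamber, and $c_1(X)$ is a monopole class for all metrics. On the blow-up, given $g$, one must take $\epsilon_i=-\operatorname{sign}(E_i\cdot[\omega_g])$: this keeps $\mathfrak{a}_\epsilon\cdot[\omega_g]<0$, i.e.\ stays in the chamber where Taubes plus the blow-up formula give solutions, and because $b^+=1$ forces $\mathfrak{a}^+_g=\bigl(\mathfrak{a}\cdot[\omega_g]/[\omega_g]^2\bigr)[\omega_g]$ for every class, the same choice maximizes the estimate:
\begin{equation*}
((\mathfrak{a}_\epsilon)^+_g)^2=\frac{(\mathfrak{a}_\epsilon\cdot[\omega_g])^2}{[\omega_g]^2}
\ \ge\ \frac{(c_1(X)\cdot[\omega_g])^2}{[\omega_g]^2}
\ =\ ((c_1(X))^+_g)^2\ \ge\ c_1^2(X).
\end{equation*}
Thus in the $b^+=1$ case the chamber constraint and the curvature estimate single out the \emph{same} sign vector, and it is this coincidence --- not averaging, which ignores the chamber constraint --- that completes LeBrun's proof.
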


This bound is  sharp in the case of minimal complex surfaces of 
Kodaira dimension $0,1,$ or $2$  \cite{kod-dim,no-ein}. 
In general, the class of symplectic manifolds is much larger than that of K\"ahler surfaces, 
and we do not expect  that the bound is optimal for an arbitrary manifold.
However, we show that this bound is also sharp for the class  of symplectic manifolds constructed in Sect. \ref{constr}.

\begin{thm}
\label{metrics}
Let $M$ be a compact complex surface containing $s$ configurations of rational curves of type $C_{n_i,m_i}$, with $m_i<n_i$ and $\gcd(n_i,m_i)=1.$
Let $\wh M'$ be the orbifold surface obtained by collapsing these chains and all the rational curves of self-intersection $(-2)$ which are
disjoint from these configurations. 
Assume that the canonical sheaf of $\wh M'$ is ample. 
Let $N$ be the manifold obtained by taking the generalized rational blowdown of the configurations $C_{n_i,m_i}$ in $M.$
Then there exists a family of metrics $g_t$ on $N$ such that 
\begin{equation}
 \lim_{t\to0^+}\int_N s_{g_t}^2 d\mu_{{g_t}}=32\pi^2c_1^2(N).
 \end{equation}
Moreoever, if $N'=N\#l\cpb,$ then there exists a family $g'_t$ of metrics on $N'$ such that  
\begin{equation}
\label{mtr-bl}
 \lim_{t\to0^+}\int_{N'} s_{g'_t}^2 d\mu_{{g'_t}}=32\pi^2c_1^2(N).
 \end{equation}
\end{thm}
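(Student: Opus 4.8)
The plan is to build the family $g_t$ on $N$ by desingularizing a canonical orbifold metric on $\wh M'$ and gluing in the scalar-flat ALE models, then to show that every bit of scalar curvature created by the gluing is concentrated in regions of vanishingly small volume. First I would produce the background metric: since $K_{\wh M'}$ is ample, Kobayashi's solution of the Calabi conjecture for orbifolds \cite{kob} furnishes a K\"ahler--Einstein orbifold metric $g_{KE}$ on $\wh M'$ with negative Einstein constant. Being Einstein, it is extremal for the curvature functional, so $\int_{\wh M'} s_{g_{KE}}^2\, d\mu = 32\pi^2 c_1^2(\wh M')$; and by Proposition \ref{prop1} together with the identity $K_{\wh M}=p^*K_{\wh M'}$ one has $c_1^2(\wh M')=c_1^2(\wh M)=c_1^2(N)$, so the target value $32\pi^2 c_1^2(N)$ is exactly the energy of $g_{KE}$.

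Next I would realize $N$ as a desingularization of the orbifold $\wh M'$. Its isolated singularities are the rational double points coming from the collapsed $(-2)$-curves and the cyclic singularities $\frac{1}{n_i^2}(1,n_im_i-1)$ coming from the chains $C_{n_i,m_i}$, and $N$ is recovered by replacing a neighborhood of each by, respectively, the minimal resolution with its ALE hyperk\"ahler (Eguchi--Hanson type) metric and the rational homology ball $(B_{n_i,m_i},g_{n_i,m_i})$ of Theorem \ref{met-rhb}. All of these model metrics are scalar-flat. For each small $t>0$ I would excise from $(\wh M',g_{KE})$ a ball of radius $\sim t^{2/3}$ about each singular point and glue in the corresponding model rescaled by $t^2$, interpolating the two metrics with a cut-off across an annular neck; call the result $g_t$. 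Away from the necks $g_t$ is either $g_{KE}$ or a rescaled scalar-flat model, so $s_{g_t}^2$ integrates to $32\pi^2 c_1^2(\wh M')$ over the bulk and to $0$ over each glued-in piece.

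The heart of the matter, and the main obstacle, is the neck estimate. Both $g_{KE}$ (in orbifold normal coordinates about a singular point) and the rescaled model (near its ALE infinity) are asymptotic to the same flat cone $\CC^2/\Gamma_i$: the former differs from flat by $O(r^2)$, while, since $\phi_* g_{n_i,m_i}=g_{Eucl}+O(R^{-4})$, the latter differs by $O(t^4 r^{-4})$ at physical radius $r$. With the neck placed at $r\sim t^{2/3}$ both perturbations are $O(t^{4/3})$, so the cut-off produces scalar curvature of size $O(1)$ there while the neck has volume $O(t^{8/3})$; hence $\int_{\mathrm{neck}} s_{g_t}^2\, d\mu_{g_t}=O(t^{8/3})\to0$, and likewise the energy lost with the excised balls is $O(t^{8/3})$. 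That the gluing is genuinely unobstructed --- that the ALE models sit inside the flat cone with the stated decay and can be inserted without creating a Yamabe obstruction --- is exactly the content of Viaclovsky's theorem \cite{Via} for the hyperk\"ahler pieces and of Proposition \ref{yam-rat-ball} for the balls $B_{n_i,m_i}$. Letting $t\to0^+$ then yields $\int_N s_{g_t}^2\, d\mu_{g_t}\to 32\pi^2 c_1^2(N)$.

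Finally, for $N'=N\#l\cpb$ I would iterate the same procedure, additionally gluing in $l$ copies of the Burns scalar-flat K\"ahler metric on the blow-up of $\CC^2$ at the origin (a neighborhood of the exceptional $\cpb$). As these inserts are again scalar-flat, the identical neck estimate applies and contributes nothing in the limit; since the extra blow-ups alter neither the bulk term nor the model energies, the resulting family $g'_t$ satisfies $\int_{N'} s_{g'_t}^2\, d\mu_{g'_t}\to 32\pi^2 c_1^2(N)$, with the same value $c_1^2(N)$ as before. I expect the delicate point throughout to be quantifying the neck scalar curvature, i.e.\ verifying that the cut-off contribution is $O(1)$ pointwise so that the $O(t^{8/3})$ volume factor forces the energy to zero.
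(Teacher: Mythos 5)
Your proposal is correct and takes essentially the same route as the paper: an orbifold K\"ahler--Einstein metric on $\wh M'$ supplied by Kobayashi's orbifold Calabi theorem, cut-and-paste gluing of the rescaled ALE Ricci-flat K\"ahler metrics on $B_{n_i,m_i}$ and the ALE hyperk\"ahler metrics on the $ADE$-spaces across a small annular neck, pointwise second-derivative bounds plus the vanishing neck volume to kill the transition-region energy, and Burns metrics for the extra blow-ups (the paper places the neck at $r\sim t$ with its own rescaling convention rather than your $r\sim t^{2/3}$, but the estimates are identical in spirit and yours are self-consistent). One remark: your appeal to Viaclovsky's theorem and Proposition~\ref{yam-rat-ball} for ``unobstructedness'' is superfluous and slightly misattributed --- nothing needs to be unobstructed in this explicit gluing, since no equation is being solved; all that is required is the existence and decay of the ALE models, which is Theorem~\ref{met-rhb} (from \cite{ricflat}) and Kronheimer \cite{kron1}, while those Yamabe-theoretic results enter only later, in the proof of Theorem~\ref{main}.
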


\begin{proof}
The singular points of $\wh M'$ are isolated, of either type $\frac1{n^2}(1,nm-1),$ 
when a configuration of type $C_{n,m}$ is collapsed, 
or rational double points, in the case when a connected configuration of $(-2)$ curves is contracted. 
The singularities of the last type are of the form $\CC^2/G,$ with $G\subset SU(2)$ finite subgroup, 
acting freely on $\CC^2\setminus\{0\},$ and are  called $ADE$ singularities, in correspondence with the
Dynkin diagram associated to the configuration of curves appearing in the minimal resolution.
We refer to the minimal resolution as the $ADE-$space.
Then the manifold $N$ is obtained from the orbifold surface $\wh M'$ by removing the singular points and
gluing in  rational homology balls or $ADE-$spaces, respectively.

The proof of the theorem follows the line of arguments in \cite{no-ein}, 
where a family of metrics saturating the bound in Theorem \ref{CL-bound} is constructed.
We give the explicit construction of these metrics.

Since the orbifold surface $\wh M'$ has ample canonical sheaf, then 
 the solution of the orbifold version of the Calabi Conjecture \cite{kob} 
endows $\wh M'$ with an orbifold K\"ahler-Einstein metric $\hat g.$ 
The scalar curvature $s_{\hat g}$ and Ricci form $\hat \rho$ of $\hat g$  satisfy:
$$\int_{\wh M'}s_{\hat g}^2 ~d\mu_{\hat g}=8\int_{\wh M'} {\hat \rho}^{~2}=32\pi^2 c_1^2(\wh M').$$
Consider $\hat g-$geodesic coordinates around each orbifold point of $\wh M'$ 
such that the metric $\wh g=g_{Eucl}+O(r^2),$ where $g_{Eucl}$ and $r$ denote the Euclidean metric
and the radius associated with the chart, respectively. 
Let $h_1=g_{Eucl}-\wh g\in O(r^2).$

On the complement of the open neighborhood of infinity $U_\infty\subset B_{n,m}$ we decompose the metric 
$g_{n,m}$ as $g_{Eucl}+h_2,$ with $h_2\in O(R^{-4}).$ 
We change the coordinates of 
$U_\infty\cong(\CC^2/\frac1{n^2}(1, nm-1)\setminus B_T(0))$  such that 
$U_\infty\cong(\CC^2/\frac1{n^2}(1, nm-1)\setminus B_ {\frac t2}(0)),$ and then rescale the metric. 
Then, on $B_{n,m}$ we have a family of charts at infinity defined for $r>\frac t2$ such that the ALE Ricci-flat K\"ahler metric is 
asymptotically  of the form $g_{Eucl}+h_2',$ with $h_2'\in O(t^2).$ 
The volume of the compact set $B_{n,m}\setminus U_\infty$ is rescaled by a factor of $(\frac t {2T})^4.$
The $ADE-$spaces have hyperk\"ahler metrics $g_{hyp},$ with the same asymptotic description \cite{kron1}. 
Hence, we can introduce a common notation for a neighborhood of infinity, and the metric decomposition in both cases.
For the rest of the proof $(U_\infty, g_\infty)$ denotes a neighborhood of infinity in $B_{n,m}$ or in an $ADE-$space, which is isometric
to $(\CC^2/G\setminus B_{\frac t2}(0), g_{Eucl}+h_2'),$ where $G$ is $\frac1{n^2}(1,nm-1)\subset U(2)$ or $G\subset SU(2),$ respectively.

In order to define a metric on $N,$ we remove a ball of radius $t$  around each singularity in $\wh M'$ and 
glue in a rational homology ball $B_{n,m}$ or an $ADE$-space. 
The new metric  interpolates 
between the metric $\wh g$ and the rescaling of the metric $g_{n,m}$ or $g_{hyp}$ in the region $t<r<2t,$ for some small parameter $t.$
As the gluing is done by identifying the region $\{z\in \CC^2/G|~~  t<|z|<2t\}\subset U_\infty$ 
to a region given by the same local coordinates in a neighborhood of a singular point in $\wh M',$ 
we can do the gluing on the covering $\{z\in\CC^2|~~  t<|z|<2t\}.$

Let $\phi:[0,4]\to \RR$ be a non-negative smooth function such that $\phi$ is identically $1$ on the interval $[0,1]$ and identically $0$
on the interval $[2,4].$ 
We define the metric $g_t$ on the larger collar $\{z\in\CC^2|~~ \frac t2<|z|<4t\}$ :
$$g_t:=\wh g+ \phi(\frac{r}t)(h_1+h_2).$$
Then for $r>2t$ the metric $g_t$ is identical to $\wh g,$ while for $\frac t2<r<t$ the metric is the rescaled metric $g_\infty.$
Hence, we have defined a family of metrics $(N, g_t).$

On the interpolation region $t<r<2t$ we have the following uniform bounds:
\begin{align*}
||g_t- g_{Eucl}||&\leq C t^2  \\
||D g_t||&\leq Ct \\
||D^2 g_t||&\leq C, 
\end{align*}
where $D$ is the Euclidean derivative operator associated with the given coordinate system, and the constant $C$ is independent of $t.$
Then $s^2_{g_t}$ is uniformly bounded as $t\to 0.$ Since the volume of the annular transition region goes to zero, we have:
$$\lim _{t\to 0^+} \int_N s_{g_t}^2 d\mu_{g_t}=\int _{\wh M} s_{\wh g}^2 d\mu_{\wh g}=32\pi^2 c_1^2(\wh M)=32\pi^2c_1^2(N).$$

The blow-up  manifold, $N'=N\#l\cpb,$ can be obtained from $N$ by removing $l$ 
smooth points and gluing in the total space of $\OO_{\bcp^1}(-1),$ for each of the points. 
The total space of $\OO_{\bcp^1}(-1)$ admits an ALE scalar flat K\"ahler metric called the Burns metric. 
The  family $g_t'$ satisfying equation \ref{mtr-bl} can be obtained from the family of metrics $g_t$ 
using the explicit gluing in \cite[Sect. 5]{no-ein}.
\end{proof}

\begin{rmk}
There is no a priori reason for the family of Riemannian metrics  in the above construction 
to be compatible with an underlying symplectic structure. 
\end{rmk}

Then, we immediately have that:
\begin{cor} 
\label{invar}
For  $N$ and $N'$ given above we have:
\begin{itemize}
\item[(1)] $$\inf_g \int_N s_g^2 ~d\mu_g=\inf_g \int_{N'} s_g^2 ~d\mu_g =32\pi^2c_1^2(N),$$
\item[(2)] the Yamabe invariant $Y(N)=Y(N')=-4\sqrt{2c_1^2(N)}.$
\end{itemize}
\end{cor}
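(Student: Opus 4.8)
The plan is to prove (1) by a two-sided squeeze and then to read off (2) from the non-positive characterization of the Yamabe invariant in Proposition \ref{comp-yam}. All the genuine analytic work has in fact already been carried out: the lower bound is Theorem \ref{CL-bound}, and the matching upper bound is Theorem \ref{metrics}. So the corollary is essentially an assembly step, the only delicate point being the determination of the sign in (2).

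For the lower bound, I would first record that by Proposition \ref{prop22} the manifold $N$ is a minimal symplectic $4$-manifold of general type; in particular $c_1^2(N)>0$ and there is a symplectic form $\omega_N$ with $c_1(N)\cdot[\omega_N]<0$. Applying Theorem \ref{CL-bound} with $X=N$ and $l=0$ yields $\int_N s_g^2\,d\mu_g\geq 32\pi^2 c_1^2(N)$ for every Riemannian metric $g$ on $N$, and applying it with $X=N$ and $M=N'=N\#l\cpb$ gives the same bound on $N'$. The conditional hypothesis in Theorem \ref{CL-bound} (required only when $b^+=1$) is satisfied automatically here, since $c_1(N)\cdot[\omega_N]<0$ holds irrespective of the value of $b^+$. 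Hence $\inf_g\int_N s_g^2\,d\mu_g\geq 32\pi^2 c_1^2(N)$, and likewise for $N'$.

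For the upper bound, the families $g_t$ on $N$ and $g_t'$ on $N'$ produced in Theorem \ref{metrics} satisfy $\lim_{t\to0^+}\int s^2\,d\mu=32\pi^2c_1^2(N)$, so both infima are $\leq 32\pi^2c_1^2(N)$. Combining the two inequalities proves (1). To deduce (2), I would use that a symplectic manifold of general type has non-positive Yamabe invariant, as recalled in the introduction; this follows from the non-vanishing of the Seiberg--Witten invariant, which obstructs metrics of positive scalar curvature. Since $N$ and its blow-ups $N'$ are of general type, we have $Y(N)\leq 0$ and $Y(N')\leq 0$, so Proposition \ref{comp-yam}(2) applies and gives $|Y(N)|^2=\inf_g\int_N s_g^2\,d\mu_g=32\pi^2c_1^2(N)$, whence $Y(N)=-\sqrt{32\pi^2c_1^2(N)}=-4\pi\sqrt{2c_1^2(N)}$, and identically for $N'$, so that $Y(N')=Y(N)$.

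The hard part, such as it is, is precisely this last sign step. Without first establishing $Y\leq 0$ one cannot legitimately invoke Proposition \ref{comp-yam}(2), and the value of the infimum by itself does not pin down the sign of the invariant; it is the general-type hypothesis, through Seiberg--Witten theory, that rules out positive scalar curvature and forces the minus sign. Everything else is a direct combination of the lower bound of Theorem \ref{CL-bound} with the saturating families of Theorem \ref{metrics}.
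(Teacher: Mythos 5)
Your proposal is correct and follows essentially the same route as the paper: the lower bound from Theorem \ref{CL-bound} applied with $X=N$ (both for $N$ and for $N'=N\#l\cpb$), the matching upper bound from the families of metrics in Theorem \ref{metrics}, and then Proposition \ref{comp-yam}(2) together with the negativity of the Yamabe invariant for symplectic manifolds of general type to fix the sign in (2). One small caution on phrasing: you should invoke Proposition \ref{prop22} only for the general-type conclusion ($c_1^2(N)>0$ and $c_1(N)\cdot[\omega_N]<0$), not for minimality, since in the paper the minimality half of that proposition is itself proved \emph{afterwards} using this very corollary --- your argument never actually uses minimality, so no circularity arises, but citing it invites one.
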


\begin{proof}
Part $(1)$ is a consequence of the above proposition and the curvature bound in Theorem \ref{CL-bound}. Part $(2)$  follows from Proposition \ref{comp-yam}, and the fact that the symplectic manifolds of general type have negative Yamabe invariants \cite{no-ein}.
\end{proof}

We can now give a proof for the second part of Proposition \ref{prop22}:
\begin{proof}[Proof of minimality in Proposition \ref{prop22}]
\label{minimality}
Assume that the manifold $N$ is obtained by blowing up the symplectic manifold $X$ at $l'>0$ points, $N=X\#l'\cpb.$ 
Then  $c_1^2(N)=c_1^2(X)-l',$ and by Theorem \ref{CL-bound}:
$$\int_N s_g^2 d\mu_g\geq 32\pi^2c_1^2(X)=32\pi^2(c_1^2(N)+l'), ~l'>0$$
 for any metric $g$ on $N.$ But this is in contradiction with Corollary \ref{invar}.
\end{proof}

\begin{proof}[Proof of Theorem \ref{main}]
 Proposition \ref{prop22} implies that $N$ is a minimal symplectic manifold of general type. 
Moreover,  Corollary \ref{invar} and Proposition \ref{prop1} tell us that:
$$Y(N)=Y(N')=-4\pi\sqrt{2c_1^2(N)}=-4\pi\sqrt{2c_1^2(\wh M)}.$$
Moreover, as $p^*(K_{\wh M'})=K_{\wh M},$ the above equality also equals $-4\pi\sqrt{c_1^2(\wh M')}.$
We only need to show that the orbifold Yamabe invariants of $\wh M$ and $\wh M'$ are the expected ones.
In order to do this, we generalize Theorem B in \cite{aku} to include orbifolds which have 
singularities of type $\frac1{n^2}(1,nm-1).$ 

We can give a second description of the manifold $N$ 
as the connected sum of the orbifold spaces $\wh M$ and $\wh B_{n_i,m_i}, i=1,\dots, s$ at the singular points. 
Similarly, $\wh M$ can be obtained by taking the connected sum of $\wh M'$ and the
one-point compactifications of $ADE-$spaces.

In \cite{aku}, Akutagawa considers the connected sum  of two orbifold $4-$spaces $M_1$ and $M_2$
at two isolated orbifold points of type $\RR^4/\Gamma$:
$$N:=M_1\#_{S^3/\Gamma}M_2.$$ 
He proves that if  $Y(N)<0$ and $Y^{orb}(M_2)>0$ then $Y^{orb}(M_1)\leq Y(N)\leq0.$
 Proposition \ref{yam-rat-ball} tells us that the one-point compactification  $(\wh B_{n,m},[\wh g_{n,m}])$ 
 has positive Yamabe invariant. 
Applying Theorem A in \cite{aku} for $M_1=\wh M$ and $M_2=\wh B_{n,m}$ we see that $Y^{orb}(\wh M)\leq Y(N)<0.$ 

The one point conformal compactification of an $ADE-$space has positive Yamabe invariant  \cite[Theorem $1.3$]{Via},
and similarly,  we have $Y^{orb}(\wh M')\leq Y^{orb}(\wh M).$

On the other hand,  $\wh M'$ admits a K\"ahler-Einstein metric $\wh g$ given by the positive solution of the Calabi conjecture \cite{kob}. 
This is the Yamabe minimizer in its conformal class, and satisfies:

$$\displaystyle \frac{\int_{\wh M'} s_{\wh g}d\mu_{\wh g}}{Vol^{\frac12}({\wh g})}=s_{\wh g}=-\sqrt{32\pi^2c_1^2(\wh M')}=-\sqrt{32\pi^2c_1^2(N)}=Y(N).$$ 
Then  we have that $Y^{orb}(\wh M')=Y(N)=-4\pi\sqrt{2c_1^2(\wh M')},$
and so $Y^{orb}(\wh M')=Y^{orb}(\wh M).$
\end{proof}

\section{An explicit family of symplectic, non-K\"ahler examples} 
\label{non-K-exp}


The family of examples described in this section was first introduced by Fintushel and Stern \cite{FiSt}, 
and later reconsidered by Y. Lee and J. Park \cite{lp2}.
In this paper, we follow the second description. 

\medskip

Let $E(n)$ be the relatively minimal, simply connected elliptic surfaces with sections, 
which have Euler characteristic $\chi(E(n))=12n$. 
The diffeomorphism type of $E(n)$ is unique \cite[Theorem 3.2.9]{GoSt}, but the complex structure might vary. 
For $n\geq4,$ Lee and Park \cite{lp2} exhibit such a preferred complex structure with two sections of self-intersection $(-n),$ 
which are part of two disjoint configurations of $(n-3)$ curves of type $(-n,-2,\cdots,-2).$ 
Let $\wh X_n$ be the manifold obtained by contracting one chain of type $(-n,-2,\cdots,-2)$ 
to a singular point $P$ of type $\frac1{(n-2)^2}(1,n-3),$ and the $(n-4)$ spheres of self-intersection $(-2)$ 
from the second chain to a singular point $Q$ of type $\frac1{n-3}(1,n-4).$
We have:

\begin{prop}
\label{orb-ampl}
For every $n\geq4,$ there exists an orbifold surface $\wh X_n$ with two orbifold points of type 
$\frac1{(n-2)^2}(1,n-3)$ and $\frac1{n-3}(1,n-4),$ such that its canonical sheaf is ample and $c_1^2(\wh X_n)=n-3. $ 
\end{prop}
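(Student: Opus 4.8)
The plan is to compute $c_1^2(\wh X_n)$ first, and then establish ampleness of $K_{\wh X_n}$ by verifying positivity against every curve class via Nakai--Moishezon, working on the smooth elliptic surface $E(n)$ and pushing forward. First I would set up the numerics. Since $\wh X_n$ is obtained from $E(n)$ by contracting the two given chains, the self-intersection $c_1^2$ can be computed from $c_1^2(E(n))=0$ together with the orbifold correction terms, exactly as in the proof of Proposition \ref{prop1}(1): the square of the orbifold canonical class equals $2\chi^{orb}+3\tau^{orb}$, and collapsing each chain of length $\ell$ raises $c_1^2=2\chi+3\tau$ by $\ell$. Here one chain has length $n-3$ (the full $(-n,-2,\dots,-2)$ configuration of type $\frac{1}{(n-2)^2}(1,n-3)$) and the second contributes the $(n-4)$-sphere subchain collapsing to a singularity of type $\frac{1}{n-3}(1,n-4)$. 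Keeping careful track of which curves are collapsed and which survive should yield $c_1^2(\wh X_n)=(n-3)$.

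Next I would address ampleness, which is the main content. By the Nakai--Moishezon criterion for orbifold (or $\QQ$-Cartier) divisors, $K_{\wh X_n}$ is ample iff $K_{\wh X_n}^2>0$ and $K_{\wh X_n}\cdot C>0$ for every irreducible curve $C\subset \wh X_n$. The first inequality is $c_1^2(\wh X_n)=n-3>0$ for $n\geq 4$. For the second, I would pull back to $E(n)$: write $K_{E(n)}=(n-2)F$ where $F$ is the fiber class, and express $K_{\wh X_n}$ via the minimal resolution so that $\pi^*K_{\wh X_n}=K_{E(n)}+\sum a_j E_j$ with the discrepancy coefficients $a_j$ determined by the condition that $\pi^*K_{\wh X_n}$ has zero intersection with each exceptional curve $E_j$ in the two chains. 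Then $K_{\wh X_n}\cdot C$ for a curve $C$ on $\wh X_n$ equals $(\,K_{E(n)}+\sum a_j E_j\,)\cdot \tilde C$ where $\tilde C$ is the strict transform, and I would check this is positive for each relevant class: fibers, the surviving sections, and any other curve. Because $K_{E(n)}=(n-2)F$ is nef and the discrepancy corrections are supported on the contracted chains, the only curves requiring genuine attention are those meeting the chains.

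The hard part will be verifying $K_{\wh X_n}\cdot C>0$ uniformly over all irreducible curves, in particular controlling curves that are contracted or that intersect the resolution chains, where the discrepancy terms $\sum a_j E_j$ can make the intersection small or negative if the continued-fraction data is mishandled. The computation of the $a_j$ from the Hirzebruch--Jung data of $\frac{1}{(n-2)^2}(1,n-3)$ and $\frac{1}{n-3}(1,n-4)$ is routine linear algebra (inverting the intersection matrix of the chain), but it must be done correctly to confirm that no curve yields a nonpositive intersection. I would also double-check that no $(-1)$-curve or $(-2)$-curve disjoint from the singular points survives to obstruct ampleness — consistency with the hypotheses of Proposition \ref{prop22} and Theorem \ref{metrics} requires that $\wh X_n$ already be the fully contracted model $\wh M'$, so I would confirm that every $(-2)$-curve off the singular locus has been accounted for. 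Once positivity against all curve classes is established, Nakai--Moishezon gives ampleness and completes the proof.
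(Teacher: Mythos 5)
Your ampleness strategy is, in outline, the same as the paper's (Nakai--Moishezon, pulling intersection computations back to $E(n)$, reducing to the curves near the contracted chains), but your numerical step contains a genuine error. The rule you invoke --- ``collapsing a chain of length $\ell$ raises $c_1^2=2\chi+3\tau$ by $\ell$'' --- is valid only for chains contracting to singularities of type $\frac1{N^2}(1,Nm-1)$: it rests on Park's eta-invariant computation, which makes the orbifold correction terms in $2\chi^{orb}+3\tau^{orb}$ cancel precisely for that class. The second point of $\wh X_n$ is of type $\frac1{n-3}(1,n-4)$, an $A_{n-4}$ rational double point, which is never of that form (one would need $n-3=N^2$ and $m=N$, contradicting $m<N$, $\gcd(N,m)=1$). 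For an ADE point the contraction is crepant, i.e.\ $K_{E(n)}$ is the pullback of $K_{\wh X_n}$ along that chain, so collapsing it leaves the orbifold $c_1^2$ \emph{unchanged}, even though $2\chi^{top}+3\tau^{top}$ does increase by $n-4$. Applying your rule to both chains therefore yields $0+(n-3)+(n-4)=2n-7$, not $n-3$; ``keeping careful track'' cannot rescue the stated method --- you must replace the rule by the crepancy statement for the ADE chain, or compute as the paper does: $K_{\wh X_n}=(n-2)\wh F$ with $\wh F$ the image of a generic fiber, $\wh F^2=(n-3)/(n-2)^2$, hence $K_{\wh X_n}^2=n-3$.

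On ampleness, your plan cannot be completed as described without an ingredient you never introduce: an explicit model of $E(n)$ in which all curves meeting the chains can be enumerated. The paper builds exactly this (Gompf's double cover of $\Sigma(4)$ for $n=4$, where $\wh X_4$ has in fact only \emph{one} orbifold point and is treated separately, and Lee--Park's double cover of an iterated blow-up of $\Sigma(n)$ for $n\geq5$). The issue is delicate precisely because of crepancy: since the discrepancies along the ADE chain vanish, any surviving vertical curve meeting only that chain would satisfy $K_{\wh X_n}\cdot C=0$, and ampleness would fail. What makes the proposition true is a feature of the specific configuration: the only components of the special fiber that survive the contraction, the images $\wh E_1$, $\wh E_2$ of $d^{-1}(E_1)$, $d^{-1}(E_2)$, each meet the non-crepant chain, giving $K_{\wh X_n}\cdot\wh E_1=1/(n-2)>0$ and $K_{\wh X_n}\cdot\wh E_2=(n-4)/(n-2)>0$, while every other curve is either horizontal or an irreducible fiber through the image of the contracted section, with $K_{\wh X_n}\cdot C\geq (n-3)/(n-2)>0$. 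These verifications, which you correctly identify as ``the hard part'' but defer, are the actual content of the paper's proof and require the double-cover construction to carry out.
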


\begin{proof}
The first  case, when $n=4$ appeared earlier in the work of Gompf \cite{Go}, 
and the surface $\wh X_4$ has only one orbifold point, which is of type $\frac14(1,1).$  
For clarity, we treat this case separately.

Let $\Sigma(4)=\PP(\OO_{\bcp^1}\oplus\OO_{\bcp^1}(4))$ be the Hirzebruch surface with a negative section of self-intersection $(-4).$ 
We denote by $C_0$ the negative self-intersection section, and by $f$ a generic fiber. 
Let $D$ be a smooth complex curve in the linear system $|4(C_0+4f)|.$ 
The double cover of $\Sigma(4)$ branched along $D$ is the elliptic surface $E(4).$ 
As the branch locus $D$ is disjoint from $C_0,$ the pullback of $C_0$ has two connected components,
which are both sections of the elliptic fibration and have self-intersection $(-4).$  
We  denote them by $C_1$ and $C_2.$ 
Gompf shows that if we consider the rational blowdown of both $C_1$ and $C_2$ the new manifold admits a complex structure, 
and it is a Horikawa surface \cite{bpv}, while if we consider the rational blowdown of only one curve the new symplectic $4-$manifold does not admit a complex structure.
Let $\wh X_4$ be the orbifold surface obtained by collapsing  only $C_2.$ 
Following the description of $E(4)$ as a double cover $d: E(4)\to\Sigma(4),$ we can compute the canonical divisor \cite[Lemma 17.1]{bpv}:
\begin{eqnarray*}
K_{\Sigma(4)}&=&-2C_0-6f\\
K_{E(4)}&=&d^*(K_{\Sigma(4)}+\frac12D)\\
&=&d^*(-2C_0-6f+2(C_0+4f))=d^*(2f)=2F,
\end{eqnarray*}
where $F=d^*(f)$ is a generic fiber of the elliptic fibration.  
Let $\pi: E(4)\to \wh X_4$ be the map given by collapsing the curve $C_2$ to an orbifold point of type $\frac 14(1,1),$ and let $f'=\pi_*(F).$ 
Then the canonical divisor of $\wh X_4$ is $K_{\wh X_4}=2f'.$
The pull-back of $f'$ is $\pi^*(f')=F+\frac14C_2,$ which implies that 
$$f'^2=(\pi^*(f'))^2=F^2+\frac12F\cdot C_2+\frac1{16}C_2^2=0+\frac12-\frac14=\frac14.$$ 
As we can choose a representative of $f'$ to pass through any point of $\wh X_4$, 
the Nakai-Moishezon criterion implies that $f'$ is an ample divisor and as a consequence $K_{\wh X_4}$ is ample. 
Moreover, we have  $K_{\wh X_4}^2=4 f'^2=1.$

For $n\geq5,$ we use the complex structure on $E(n)$ given by the construction in \cite{lp2}. 
 In the Hirzebruch surface $\Sigma(n)=\PP(\OO_{\bcp^1}\oplus\OO_{\bcp^1}(n)),$ 
 let $f_0$ be a fixed fiber and $C_0$ the negative section with $C_0^2=-n.$  
 Lee and Park show \cite{lp2} that  the linear system $|4(C_0+nf)|$ contains a complex divisor, 
 $D,$ which has exactly two singular  points $p,$ and $q,$ both lying on $f_0.$ 
 If $(x,y)$ are the local coordinates at $p$ (respectively at $q$), 
 such that the fiber $f_0$ is given by $(x=0),$ 
 then the equation of $D$ is given by $(y-x)(y+x)=0$ ( $(y-x^{n-4})(y+x^{n-4})=0,$ respectively).
 We resolve the singularities of $D$ by blowing up $\Sigma(n)$ at $p$ once, and 
 $(n-4)$ times at $q.$
 We denote by $\Delta$ the proper transform of $D.$
 To be consistent with the notation in \cite{lp2}, we denote the first blow-up by $E_1,$ 
 and proper transform of the blow-ups at $q$ by $U_{n-5}, \dots, U_{1}, E_2.$ 
 Notice that $E_{1,2}$ are self-intersection $(-1)$ rational curves, while $U_i, i=1,\dots,n-5$ have self-intersection $(-2).$ 
 We denote by $U_{n-4}$ the proper transform of the special fiber $f_0.$
 This is also a self-intersection $(-2)$ curve.
 Let $\pi:Z_n\to\Sigma(n)$ denote the above iterated blow-up of $\Sigma(n)$ at $(n-3)$ points. 
 Then we have the following computations for the canonical divisors:
 \begin{eqnarray*}
K_{\Sigma(n)}&=&-2C_0-(n+2)f\\
K_{Z_n}&=&\pi^*(K_{\Sigma(n)})+E_1+U_{n-5}+2U_{n-6}+\dots+(n-5)U_1+(n-4)E_2.
\end{eqnarray*}
Moreover, we have:
\begin{eqnarray*}
f&=&U_{n-4}+\dots+U_1+E_1+E_2\\
\Delta&=&4(U_{n-4}+nf)-2[E_1+(n-4)E_2+(n-5)U_1+\dots+2U_{n-6}+U_{n-5}],
\end{eqnarray*}
where $f$ is the class of the proper transform of the generic fiber of $\Sigma(n)$ in $Z_n.$
The double cover of $Z_n$ branched along $\Delta$ is the elliptic surface $E(n)$.  
Let $d:Z_n\to E(n)$ denote the branch cover, and $F$ the class of the proper transform of a generic fiber. 
The canonical divisor of $E(n)$ is
\begin{eqnarray}
K_{E(n)}&=&d^*(K_{Z_n}+\frac12\Delta)\notag\\
&=& d^*((n-2)f)	=(n-2)F	\notag
\end{eqnarray}

As the branch locus is disjoint from the chain of curves $C_0, U_{n-4},\cdots,U_1,$ 
its preimage consists of two chains of curves which we denote by 
$C_0', U_{n-4}',\cdots,U_1',$ and $C_0'', U_{n-4}'',\cdots,U_1''.$ 
By Artin's criterion we can collapse the chain $C_0', U_{n-4}',\cdots,U_1',$ and  the chain $ U_{n-4}'',\cdots,U_1''$ 
to obtain an orbifold surface with two singular points, which we denote by $\wh X_n$. 
Then the Hirzebruch-Jung continued fraction tells us that the singularities are of type: 
$\frac1{(n-2)^2}(1,n-3)$ and $\frac1{n-3}(1,n-4),$ respectively. 
Let $\wh\pi:E(n)\to\wh X_n$ be the projection map. 
The projection formula implies that:
$$K_{\wh X_n}=(n-2)\wh F,$$
where we denote by $\wh F=\wh\pi(F).$
We can compute the self-intersection of $\wh F,$ by first computing 
\begin{eqnarray*}
\wh \pi^{*}(\wh F)&=&F+ a_0C_0'+a_1U_1'+\cdots+a_{n-4}U_{n-4}'+b_1U_1''+\cdots+a_{n-4}U_{n-4}''\\
&=&F+\frac1{(n-2)^2}[(n-3)C_0'+(n-4)U_1'+\cdots+1\cdot U_{n-4}'],
\end{eqnarray*}
where the above coefficients are obtained by taking the intersection with the curves in the exceptional locus.
Hence:
$$(\wh F)^2=(\wh \pi^*(\wh F))^2=(\wh \pi^*(\wh F))\cdot F=\frac1{(n-2)^2}(n-3)$$
and 
\begin{equation}
K_{\wh X_n}^2=(n-2)^2\wh F^2=n-3
\end{equation}

We use next the Nakai-Moishezon criterion to show that $K_{\wh X_n}$ is ample. 
As $\wh F$ is the proper transform of a generic fiber on $\Sigma(n),$ it will intersect any curve 
which does not lie on the total transform of the special fiber $f_0$ in a strictly positive number of points. 
So all we need to show is that the intersection of $\wh F$ with the irreducible components 
in the class of the total transform of $f_0$ is strictly positive. 
These components are $\wh\pi(C_0''), \wh\pi(d^{-1}(E_1)),\wh\pi(d^{-1}(E_2)),$ which, 
for simplicity, we denote by $\wh C_0, \wh E_1, \wh E_2.$ 
We have:

\begin{eqnarray}
\wh F\cdot \wh C_0&=&\wh \pi^{*}(\wh F)\cdot\wh \pi^{*}(\wh C_0)=\wh \pi^{*}(\wh F)\cdot C_0''=F\cdot C_0''=1>0\notag\\
\wh F\cdot \wh E_1&=&\wh \pi^{*}(\wh F)\cdot\wh \pi^{*}(\wh E_1)=\wh \pi^{*}(\wh F)\cdot E_1''=\frac1{(n-2)^2}U_{n-4}'\cdot E_1'=\frac1{(n-2)^2}>0\notag\\
\wh F\cdot \wh E_2&=&\wh \pi^{*}(\wh F)\cdot\wh \pi^{*}(\wh E_2)=\wh \pi^{*}(\wh F)\cdot E_2''=(\frac1{(n-2)^2} (n-4)U_1')\cdot E_2''\notag\\
&=&\frac1{(n-2)^2} (n-4)>0\notag
\end{eqnarray}
Hence, the canonical sheaf $K_{\wh X_n}=(n-2)\wh F$ is ample. 
\end{proof}

\begin{prop}
\label{exmp2}
The manifold $Y_n$ obtained by taking the rational blowdown of one chain of type $(-n,-2,\dots,-2)$ 
in the elliptic surface $E(n)$ is a simply connected, minimal symplectic, non-K\"ahler manifold which lies on the half-Noether line.
\end{prop}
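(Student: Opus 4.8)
The plan is to establish four independent properties of $Y_n$: simple connectivity, that it is of general type, minimality, and non-K\"ahlerness. These break into topological and analytic arguments that can be assembled from the machinery already developed in the paper. First I would verify simple connectivity. The elliptic surface $E(n)$ is simply connected by construction, and the generalized rational blowdown replaces a tubular neighborhood of the chain $C_{n-2,1}$ (of type $(-n,-2,\dots,-2)$) with the rational homology ball $B_{n-2,1}$, whose boundary is the lens space $L((n-2)^2, n-3)$. I would apply van Kampen's theorem to the decomposition $Y_n = M_0 \cup_{L((n-2)^2,n-3)} B_{n-2,1}$: the inclusion of the lens space into the complement $M_0$ kills $\pi_1(L)$ (since it already died in $E(n)$), and the map $\pi_1(L) \to \pi_1(B_{n-2,1}) \cong \ZZ/(n-2)\ZZ$ is surjective, so the pushout is trivial. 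This is the standard Fintushel--Stern argument and is routine.

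Next I would argue that $Y_n$ is a minimal symplectic $4$-manifold of general type. Here I would invoke Proposition \ref{orb-ampl} directly: the orbifold surface $\wh X_n$ obtained by collapsing the chains has ample canonical sheaf with $c_1^2(\wh X_n) = n-3 > 0$ for $n \geq 4$. However, $Y_n$ as defined in this proposition is the blowdown of only \emph{one} chain, so I must be careful that the relevant orbifold surface $\wh X_n'$ (collapsing the single chain $C_{n-2,1}$ together with the disjoint $(-2)$-curves coming from the second section's configuration) still has ample canonical sheaf. Granting that the hypotheses of Proposition \ref{prop22} hold for this $Y_n$, that proposition and the minimality argument following Corollary \ref{invar} immediately give that $Y_n$ is minimal and of general type. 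I would state this application explicitly rather than reprove it.

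The heart of the proposition, and the main obstacle, is showing that $Y_n$ admits \emph{no} integrable complex structure. The strategy the paper sets up is numerical: I would compute the characteristic numbers of $Y_n$ and check whether they violate the Noether inequality. Using Proposition \ref{prop1}(2), since $m=1$ for the single blown-down chain of type $(-n,-2,\dots,-2)$ associated to the singularity $\frac{1}{(n-2)^2}(1,n-3)$, the rational blowdown increases $c_1^2$ by $(n-2)-1 = n-3$ and decreases $c_2$ by the same amount. Starting from $E(n)$, for which $c_1^2(E(n)) = 0$ and $c_2(E(n)) = \chi(E(n)) = 12n$, I would obtain $c_1^2(Y_n) = n-3$ and $c_2(Y_n) = 12n - (n-3) = 11n + 3$. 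The Todd genus is then $\chi_h(Y_n) = \frac{1}{12}(c_2 + c_1^2) = \frac{1}{12}(12n) = n$, so $c_1^2(Y_n) = n - 3 = \chi_h(Y_n) - 3$, which places $Y_n$ precisely on the half-Noether line $y = x - 3$. I would then observe that any minimal complex surface of general type must satisfy the Noether inequality $c_1^2 \geq 2\chi_h - 6$, whereas $Y_n$ satisfies $c_1^2 = \chi_h - 3$, and for $n \geq 4$ one has $\chi_h - 3 < 2\chi_h - 6$ (equivalently $\chi_h > 3$, i.e. $n > 3$), so the Noether inequality fails strictly. Since $Y_n$ is minimal, it cannot carry an integrable complex structure of general type, and since it is of general type as a symplectic manifold it cannot be K\"ahler of any other Kodaira dimension either; hence $Y_n$ is non-K\"ahler.

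The subtle point I would flag is the consistency of the blowdown data: I must confirm that collapsing exactly one $(-n,-2,\dots,-2)$ chain really does correspond to the singularity $\frac{1}{(n-2)^2}(1,n-3)$ with $m=1$, so that Proposition \ref{prop1}(2) applies and the arithmetic above is valid; this requires matching the continued fraction expansion $\frac{(n-2)^2}{n-3}$ to the chain $(-n,-2,\dots,-2)$ of length $n-3$, which is exactly the $m=1$ case recorded in the proof of that proposition. Once this bookkeeping is checked, the Noether violation is immediate and the non-K\"ahlerness follows.
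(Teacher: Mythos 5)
Your handling of general type, minimality, and the numerics coincides with the paper's: Proposition \ref{orb-ampl} plus Proposition \ref{prop22} give that $Y_n$ is a minimal symplectic manifold of general type, and Proposition \ref{prop1}(2) with $n_i=n-2$, $m_i=1$ gives $c_1^2(Y_n)=n-3$, $c_2(Y_n)=11n+3$, $\chi_h(Y_n)=n$, hence the half-Noether line and non-K\"ahlerness. Your continued-fraction check that the chain $(-n,-2,\dots,-2)$ of length $n-3$ expands $\frac{(n-2)^2}{n-3}$, i.e.\ is the $m=1$ singularity $\frac1{(n-2)^2}(1,n-3)$, is exactly the bookkeeping needed there.

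The genuine gap is in simple connectivity. You assert that ``the inclusion of the lens space into the complement $M_0$ kills $\pi_1(L)$ (since it already died in $E(n)$).'' This is a non sequitur: triviality of the composite $\pi_1(L)\to\pi_1(M_0)\to\pi_1(E(n))=1$ says nothing about the map $\pi_1(L)\to\pi_1(M_0)$ itself. In the decomposition $E(n)=M_0\cup_{L}Nbd(C)$, the loops of the lens space die because they bound disks inside the simply connected plumbing neighborhood $Nbd(C)$ --- precisely the piece that the surgery removes --- not because they bound in $M_0$. Van Kampen applied to $E(n)$ only tells you that $\pi_1(M_0)$ is \emph{normally generated} by the image of $\pi_1(L)$, and surjectivity of $\pi_1(L)\to\pi_1(B_{n-2,1})$ alone does not force the pushout $\pi_1(M_0)*_{\pi_1(L)}\pi_1(B_{n-2,1})$ to vanish: for instance, if $\pi_1(L)\to\pi_1(M_0)$ were an isomorphism onto $\pi_1(M_0)\cong\ZZ/(n-2)^2$ (which is consistent with $\pi_1(E(n))=1$), the pushout would be $\ZZ/(n-2)\neq 1$. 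This is exactly where the paper does its geometric work: it exhibits a loop representing a generator of $\pi_1(B_{n-2,1})$ that lies on the preimage of the exceptional curve $E_1$; since that curve meets the collapsed chain transversally in a single point, its intersection with $M_0$ is a disk, so this generator is null-homotopic in $M_0$. Feeding that into van Kampen kills $\pi_1(B_{n-2,1})$ in the pushout and reduces $\pi_1(Y_n)$ to $\pi_1(M_0)$ modulo the normal closure of the image of $\pi_1(L)$, which is $\pi_1(E(n))=1$. Without this transversal-curve (meridian-bounds-a-disk) input, the ``routine Fintushel--Stern argument'' you invoke does not close.
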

\begin{proof}
Propositions \ref{orb-ampl}  and \ref{prop22} imply that $Y_n$ is a minimal symplectic surface of general type. 
Then by Proposition \ref{prop1} its topological invariants are $c_1^2(Y_N)=n-3$ and $c_2(Y_N)=11n+3,$ 
and hence $\chi_h(Y_n)=n.$
Then $Y_n$ lies on the half-Noether line $c_1^2(Y_n)=\chi_h(Y_n)-3,$ and hence it is non-K\"ahler.

The fundamental group of the boundary $\pi_1(L(n^2,nm-1))=\ZZ/n^2\ZZ$ surjects into the fundamental group of the
rational homology ball $\pi_1(B_{n,m})=\ZZ/n\ZZ.$ 
We can then find a loop, which represents a generator of the fundamental group of $B_{n,m},$ which is embedded in the
proper transform of the exceptional divisor $E_1,$ as the curve is transversal to the  collapsed chain.
Hence, the loop is homotopically equivalent to a constant loop in the complement of the configuration of spheres which is rational blowdown in $E(n).$
As $E(n)$ is simply connected then so must be $Y_n.$
\end{proof}

\begin{rmk} An interesting fact is that if one takes the rational blowdown of both chains of type $(-n, -2,\dots,-2)$ in $E(n),$ then the
new manifold lies on the Noether line and admits a K\"ahler structure \cite{lp2}.
\end{rmk}
\bigskip
Theorem \ref{main2} follows immediately from Theorem \ref{main} and Proposition \ref{exmp2}.


\subsection*{Acknowledgements} The author would like to thank Claude LeBrun for interesting discussions on the subject. While writing this paper, the author was partially supported by the NSF grant DMS-1007114. The author gratefully acknowledges support from the Simons Center for Geometry and Physics, Stony Brook University and Institut des Hautes \'Etudes Scientifiques at which some of the research for this paper was performed.

\bibliographystyle{alpha} 

\end{document}